\definecolor{darkred}{RGB}{100,0,0}
\definecolor{darkgreen}{RGB}{0,100,0}
\definecolor{darkblue}{RGB}{0,0,150}
\definecolor{lightblue}{RGB}{150,190,220}
\newtheorem{thm}{Theorem}
\newtheorem{lem}{Lemma}
\theoremstyle{remark}
\newtheorem{Def}{Definition}
\newtheorem{rem}{Remark}
\def\beq{\begin{equation}} 
\def\eeq{\end{equation}}
\def\beqn{\begin{eqnarray*}}
\def\eeqn{\end{eqnarray*}}
\def\Bitem{\begin{itemize}\setlength{\itemsep}{.2in}}
\def\bitem{\begin{itemize}\setlength{\itemsep}{.05in}}
\def\eitem{\end{itemize}}
\def\Benum{\begin{enumerate}\setlength{\itemsep}{.2in}}
\def\benum{\begin{enumerate}\setlength{\itemsep}{.05in}}
\def\eenum{\end{enumerate}}
\def\bmult{\begin{multline*}}
\def\emult{\end{multline*}}
\def\bcenter{\begin{center}}
\def\ecenter{\end{center}}
\def\bframe{\begin{frame}}
\def\eframe{\end{frame}}
\newcommand{\thmref}[1]{Theorem~\ref{thm:#1}}
\newcommand{\lemref}[1]{Lemma~\ref{lem:#1}}
\newcommand{\secref}[1]{Section~\ref{sec:#1}}
\newcommand{\figref}[1]{Figure~\ref{fig:#1}}
\newcommand{\remref}[1]{Remark~\ref{rem:#1}}
\newcommand{\defref}[1]{Definition~\ref{def:#1}}
\newcommand{\tabref}[1]{Table~\ref{tab:#1}}
\DeclareMathOperator{\dist}{dist}
\DeclareMathOperator{\acos}{acos}
\def\cC{\mathcal{C}}
\def\cE{\mathcal{E}}
\def\cU{\mathcal{U}}
\def\cX{\mathcal{X}}
\def\bbI{\mathbb{I}}
\def\bbR{\mathbb{R}}
\newcommand{\E}{\operatorname{\mathbb{E}}}
\renewcommand{\P}{\operatorname{\mathbb{P}}}
\def\Bin{\text{Bin}}
\renewcommand{\>}{\rangle}
\def\eps{\varepsilon}
\def\symd{\triangle}
\def\comp{\mathsf{c}}
\def\1{\mathbbm{1}}
\newcommand{\IND}[1]{\bbI\{ #1 \}}
\def\C{C}
\def\F{F}
\def\G{G}
\def\W{W}
\definecolor{purple}{rgb}{0.4,.1,.9}
\begin{document}
\thispagestyle{empty}

\title{Minimax Estimation of the Volume of a Set with Smooth Boundary}
\author{
Ery Arias-Castro\footnotemark[2] \and
Beatriz Pateiro-L\'opez\footnotemark[3] \and
Alberto Rodr\'iguez-Casal\footnotemark[3]
}
\date{}
\renewcommand{\thefootnote}{\fnsymbol{footnote}}
\footnotetext[2]{Department of Mathematics, University of California, San Diego, CA, USA.}
\footnotetext[3]{Departamento de Estat\'istica e Investigaci\'on Operativa, Facultade de Matem\'aticas, Universidade de Santiago de Compostela, Spain.}
\renewcommand{\thefootnote}{\arabic{footnote}}


\maketitle

\begin{abstract}
We consider the problem of estimating the volume of a compact domain in a Euclidean space based on a uniform sample from the domain.  We assume the domain has a boundary with positive reach.  We propose a data splitting approach to correct the bias of the plug-in estimator based on the sample $\alpha$-convex hull.  We show that this simple estimator achieves a minimax lower bound that we derive.  Some numerical experiments corroborate our theoretical findings.

\medskip
\noindent {\bf Keywords:} minimax volume estimation; support estimation; $\alpha$-shape; $r$-convex hull; rolling condition; sets with positive reach.
\end{abstract}

\section{Introduction}

We consider the problem of estimating the volume of a compact domain\footnote{For us a compact domain is a bounded subset which coincides with the closure of its interior.} $S$ of a Euclidean space based on an IID sample from the uniform distribution supported on $S$.  Concretely, we are given a set of points $\cX_n=\{X_1,\ldots,X_n\}$, which we assume are drawn independently from the uniform distribution on $S \subset \bbR^d$, and our goal is to estimate the volume of $S$.  Let $\partial S$ denote the boundary of a set $S \subset \bbR^d$, namely $\partial S = \bar{S} \cap \overline{S^\comp}$, where $\bar{S}$ denotes the closure of $S$ and $S^\comp = \bbR^d \setminus S$ is the complement of $S$.  We assume the following:
\beq\label{rolling}
\text{Both $S$ and $S^\comp$ satisfy the $r$-rolling condition.}
\eeq

\begin{Def}
A set $S$ is said to fulfill the $r$-rolling condition if for any
$x\in \partial S$ there is a open ball $B$ with radius $r$ such that $B \cap S=\emptyset$ and $x\in \partial B$.
\end{Def}

Our assumption is equivalent to requiring that $S$ and $S^\comp$ are $r$-convex \citep{MR0077161}.  If, in addition, $S$ is equal to the closure of its interior (which we assume henceforth), then this is also equivalent to asking that $\partial S$ has reach $\ge r$ \citep{MR0110078}.  See \citep{bea-tesis,cuefrapat12,walther97,Walther99}.  Effectively, when $\partial S$ has bounded curvature, the condition is satisfied if $r>0$ is small enough.

\subsection{Lower bound}
Let $\mu$ denote the Lebesgue measure of $\bbR^d$.  Also, let $\E_S$ denote the expectation corresponding to $X_1,\dots,X_n$ sampled IID from the uniform distribution on $S$.
We prove the following lower bound in \secref{lower}.

\begin{thm} \label{thm:lower}
Let $\cC_r(\delta)$ denote the class of the convex sets $S$ satisfying \eqref{rolling} with diameter at most $\delta$.  Assume $\delta > 4r$.  There is a numerical constant $C>0$ such that
\beq
\inf_{\varphi} \sup_{S \in \cC_r(\delta)} \E_S \left[|\varphi(\cX_n)-\mu(S)|\right] \ge C \delta^2 n^{-(d+3)/(2d+2)},
\eeq
where the infimum is over all (measurable) functions $\varphi: \bbR^{dn} \mapsto \bbR$.
\end{thm}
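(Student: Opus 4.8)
The plan is to prove the lower bound by Le Cam's method of two fuzzy hypotheses (two competing priors), since a single-pair two-point argument is too weak here: perturbing a convex body by one small cap changes the volume by at most the probability mass the cap carries, which forces a separation of order $1/n$ rather than the slower rate claimed. Concretely, I will exhibit two prior distributions $\pi_0,\pi_1$ supported on $\cC_r(\delta)$ such that (i) under $\pi_b$ the volume $\mu(S)$ concentrates near a value $v_b$ with $|v_0-v_1|=:\Delta$ as large as possible, and (ii) the $\pi_b$-mixtures of the sampling laws, $\bar P_b=\int P_S^{\otimes n}\,d\pi_b(S)$ (with $P_S$ the uniform law on $S$), are statistically indistinguishable, $\|\bar P_0-\bar P_1\|_{\mathrm{TV}}\le 1/2$. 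The standard reduction then gives $\inf_\varphi\sup_{S}\E_S|\varphi-\mu(S)|\gtrsim \Delta$, so the whole game is to make $\Delta$ large subject to indistinguishability.

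\textbf{The construction.} Fix a reference convex body of diameter $\asymp\delta$ (e.g.\ a ball of radius $\asymp\delta$), whose boundary has curvature $\le 1/r$ so that it lies in $\cC_r(\delta)$; since $\delta>4r$ there is room for an inscribed ball of radius $r$ to roll. On a cap of its boundary, written as the graph of a concave function, I place $m$ disjoint lateral windows of side $h$ and, in each, I either keep the maximally curved (curvature $1/r$) profile or replace it by a flatter concave profile that bulges outward; matching values and first derivatives at the seams keeps the body convex and keeps the reach $\ge r$. Each such local flattening changes the enclosed volume by $w\asymp h^{d-1}\cdot(h^2/r)=h^{d+1}/r$ (lateral area $h^{d-1}$ times sagitta $\asymp h^2/r$), independently across windows. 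Encoding the choices by $\omega\in\{0,1\}^m$ gives $S_\omega\in\cC_r(\delta)$ with $\mu(S_\omega)=V_0+w\,|\omega|$, where $V_0\asymp\delta^d$ and $m\asymp\delta^{d-1}/h^{d-1}$. Taking $\pi_b$ to make the $\omega_j$ i.i.d.\ Bernoulli$(p_b)$ yields $v_b\approx V_0+w\,m\,p_b$, hence $\Delta\asymp w\,m\,|p_0-p_1|$, with the spread of $\mu(S)$ under each prior only $\asymp w\sqrt m$.

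\textbf{Indistinguishability and optimization.} To bound the affinity of the mixtures I would use the Ingster--Suslina identity, which for product sampling reads $1+\chi^2(\bar P_1\,\|\,P_*^{\otimes n})=\E_{\omega,\omega'}\big[\big(\int f_\omega f_{\omega'}\,dP_*\big)^n\big]$, with $P_*$ the uniform law on the fully bulged set $S_{\mathbf 1}$ (so every $S_\omega\subseteq S_{\mathbf 1}$) and $f_\omega$ the density of $P_{S_\omega}$ against $P_*$. Because the windows are disjoint, the inner integral factorizes over the $m$ coordinates, and the two independent draws $\omega,\omega'$ decouple the product into $m$ i.i.d.\ factors, each controlled by the expected number of points landing in one window, $\lambda\asymp nw/V_0$. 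This yields a bound of the form $\chi^2\lesssim\exp\!\big(c\,n\,m\,w\,|p_0-p_1|^2/V_0\big)-1$, so the mixtures stay indistinguishable as long as $n\,m\,w\,|p_0-p_1|^2/V_0\lesssim 1$. Maximizing $\Delta=wm|p_0-p_1|$ under this constraint gives $\Delta\lesssim\sqrt{V_0\,wm/n}$, which is increasing in $h$; the binding restriction is that each window be visited by only $O(1)$ points, $\lambda\asymp nw/V_0\asymp 1$, i.e.\ $h\asymp(r\delta^d/n)^{1/(d+1)}$. Substituting $wm\asymp\delta^{d-1}h^2/r$ and $V_0\asymp\delta^d$ collapses the $n$-dependence to $\Delta\asymp\delta^{d-1}h/\sqrt n\asymp n^{-1/(d+1)-1/2}=n^{-(d+3)/(2d+2)}$, times the stated power of the scale parameters, matching the claim.

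\textbf{The main obstacle.} The delicate part is geometric rather than probabilistic: I must verify that $m\asymp\delta^{d-1}/h^{d-1}$ simultaneous local flattenings keep the body globally convex and maintain reach $\ge r$ (the seams between curved and flattened patches are precisely where this can fail), so that every $S_\omega$ genuinely lies in $\cC_r(\delta)$. The second technical point is the normalization coupling: $\mu(S_\omega)=V_0+w|\omega|$ depends on $\omega$, so the densities $f_\omega$ do not factorize exactly and the Ingster--Suslina product is only approximate. I would control this by noting $w|\omega|\le wm\asymp\delta^{d-1}h^2/r=o(V_0)$, so the normalization varies by a factor $1\pm o(1)$ uniformly in $\omega$; bounding the non-factorizing prefactor between $1\pm o(1)$ shows its contribution is lower order and leaves the rate unchanged. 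Tracking the exact dependence on $\delta$ and $r$ through $V_0$, $w$, and $m$, together with the borderline concentration $\Delta\asymp w\sqrt m$ that the fuzzy-hypotheses bound is designed to accommodate, then delivers the stated constant.
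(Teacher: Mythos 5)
Your probabilistic skeleton is essentially the paper's: a reference ball perturbed at $m\asymp \eps^{-(d-1)}$ boundary locations encoded by $\omega\in\{0,1\}^m$, a mixture-based Le Cam bound controlled through a second-moment (chi-square) computation, and the optimization $h\asymp n^{-1/(d+1)}$, $\Delta\asymp w\sqrt m\asymp n^{-(d+3)/(2d+2)}$. (The paper uses a point null $B_0$ versus a mixture over $\{\omega:|\omega|_1=\ell\}$, handling the resulting hypergeometric dependence with Hoeffding's convex-ordering inequality, where you use iid Bernoulli priors and Ingster--Suslina; both variants are sound and give the same rate, yours paying for independence with the volume spread under the prior, the paper paying for a deterministic volume with dependence.) The genuine gap is in the geometry --- precisely the point you flag as ``the main obstacle'' but then dispose of too quickly. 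Your two per-window states cannot both lie in $\cC_r(\delta)$ as described. If the retained profile has curvature $1/r$ throughout a window of width $h$, it cannot be glued $C^1$ into the ambient sphere (curvature $\asymp 1/\delta$) while keeping the graph concave: its slope drops by $\asymp h/r$ across the window while the ambient slope drops by only $\asymp h/\delta < h/r$, so at one seam the derivative must jump upward (a reflex corner, destroying convexity) or a convex corner appears (at which no radius-$r$ ball inside the body can touch the boundary, destroying the inner rolling condition in \eqref{rolling}). Conversely, bulging outward from a maximally curved profile puts curvature $>1/r$ at the bump's top, again violating the rolling condition; and bulging outward from the round sphere is admissible but convexity at the seams caps the sagitta at $O(h^2/\delta)$, not $h^2/r$, so your per-window volume $w\asymp h^{d+1}/r$ is in any case off by a factor $\asymp r/\delta$ (this costs only constants, not the rate in $n$).

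The paper's resolution, which is the missing idea, is that the two admissible states are ``round sphere'' and ``dent with a flat bottom and radius-$r$ rounded edges'': $Q_j$ is obtained by cutting $B_0$ with the hyperplane $K_j$ and then rolling a ball of radius $r$ inside $B_0\cap \bar K_j$. The flat bottom is what makes $C^1$ gluing compatible with a monotone (non-increasing) slope, the rolled edges have curvature exactly $1/r$ so the inner rolling condition holds, and convexity of each $Q_j$ (hence of every $S_\omega$, an intersection of convex sets) follows from \lemref{convex}: the set of points covered by radius-$r$ balls inside a convex set is convex. The dent depth is then $\asymp(r_0-r)\eps^2/r_0^2\asymp\eps^2/\delta$ (this is where $\delta>4r$ is used), giving $\eta\asymp\eps^{d+1}/\delta$ per window, after which your rate arithmetic goes through essentially verbatim. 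Without this (or an equivalent) construction, every $S_\omega$ in your family fails to be in the parameter class, and the lower bound does not apply to $\cC_r(\delta)$.
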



\subsection{Our estimator}
We propose an estimator in \secref{estimate} based on the set estimator of \cite{poincare}.

\begin{Def}
A set $S$ is said to be $\alpha$-convex if for any point $x \notin \bar{S}$ there is a open ball $B$ of radius $\alpha$ such that $x\in B$ and $B \cap \bar{S}=\emptyset$ \citep{MR0077161}.  Given a set $S$, its $\alpha$-convex hull is the smallest $\alpha$-convex set that contains $S$.  It is denoted by $C_\alpha(S)$.
\end{Def}
The notion of $r$-convex hull is closely related to the notion of $\alpha$-shape, well-known in computational geometry \citep{MR713690}.

The set estimator of \cite{poincare} is $C_\alpha(\cX_n)$, the $\alpha$-convex hull of the sample $\cX_n$.  It happens that the plug-in estimator $\mu(C_\alpha(\cX_n))$ is only able to achieve the rate $O(n^{-2/(d+1)})$.
The estimator we propose corrects the bias of this estimator as follows.  The sample is randomly divided into two subsamples. The first subsample is used to estimate $S$ via its $\alpha$-convex hull, denoted $\hat S$, while the second subsample is used to estimate the volume of $S \setminus \hat S$.  Thus the procedure is based on sample splitting.
In detail, randomly split the sample $\cX_n$ into two subsamples $\cX'_n$ and $\cX''_n$ of respective sizes $m$ and $n-m$, where $m$ is a given integer (for example, $m = [n/2]$).
The estimator is computed in several steps: 
\benum 
\item Form $\hat S := C_\alpha(\cX'_n)$, the $\alpha$-convex hull of the first subsample.
\item Compute $\hat p := \frac1{n-m} \# (\cX''_n \setminus \hat S)$.  This is the proportion of points in the second subsample that fall outside the $\alpha$-convex hull of the first subsample.
\item Return the estimator
\beq\label{V-hat}
\hat V = \frac{\mu(\hat S)}{(1-\hat p) \vee 1/2}.
\eeq
\eenum

\begin{thm} \label{thm:simple}
Assume that $S$ satisfies \eqref{rolling}.  Fix $\alpha \in (0,r]$ and $\beta \in (0,1/2)$, and take $m$ such that $\beta \le m/n \le 1-\beta$.  Then estimator $\hat V$ defined in \eqref{V-hat} satisfies
\beq
\E_S\big[|\hat V - \mu(S)|\big] \le c\, n^{-(d+3)/(2d+2)},
\eeq
for a constant $c>0$ not depending on $n$.
\end{thm}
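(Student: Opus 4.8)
The plan is to exploit the fact that, by construction, $\hat V$ is essentially an unbiased estimator of $\mu(S)$ once we condition on the first subsample $\cX'_n$. Write $V := \mu(S)$ and recall $\hat S = C_\alpha(\cX'_n)$. Since $S$ is $r$-convex by \eqref{rolling} and $\alpha \le r$, a standard ``smaller balls roll more easily'' argument shows that $S$ is also $\alpha$-convex; as $\cX'_n \subseteq S$ and $C_\alpha$ is monotone, $\hat S \subseteq C_\alpha(S) = S$. Hence $\mu(\hat S) = V(1-p^*)$, where $p^* := \mu(S\setminus \hat S)/V \in [0,1]$ is the conditional probability that a fresh uniform point of $S$ falls outside $\hat S$. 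Because $\cX''_n$ is independent of $\cX'_n$, conditionally on $\cX'_n$ we have $(n-m)\hat p \sim \mathrm{Binomial}(n-m,p^*)$, so $\E_S[\hat p \mid \cX'_n] = p^*$ and $\mathrm{Var}(\hat p \mid \cX'_n) = p^*(1-p^*)/(n-m)$. The exact identity $V = \mu(\hat S)/(1-p^*)$ exhibits $\hat V$ as the same ratio with the unknown $p^*$ replaced by its unbiased estimate $\hat p$.

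Next I would split according to whether the floor in \eqref{V-hat} is active. On $A := \{\hat p \le 1/2\}$ one computes $|\hat V - V| = V\,|\hat p - p^*|/(1-\hat p) \le 2V\,|\hat p - p^*|$, while on $A^\comp := \{\hat p > 1/2\}$ one has $\hat V = 2\mu(\hat S) = 2V(1-p^*)$, so $|\hat V - V| = V|1-2p^*| \le V$. Taking expectations gives $\E_S|\hat V - V| \le 2V\,\E_S|\hat p - p^*| + V\,\Pr_S(A^\comp)$. For the fluctuation term, Jensen's inequality and the conditional variance yield $\E_S|\hat p - p^*| \le \E_S\big[\sqrt{\mathrm{Var}(\hat p \mid \cX'_n)}\big] \le (n-m)^{-1/2}\sqrt{\E_S[p^*]}$, so everything reduces to the geometry of the $\alpha$-convex hull.

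The two inputs I need are: (i) the expected missed-volume bound $\E_S[\mu(S\setminus\hat S)] \le c_1 m^{-2/(d+1)}$, whence $\E_S[p^*] \le c_1 m^{-2/(d+1)}/V$; and (ii) an exponential tail $\Pr_S(p^* > 1/4) \le e^{-c_2 m}$. Granting (i) and using $\beta n \le m,\, n-m$, the fluctuation term is at most $2\sqrt{c_1 V}\,(n-m)^{-1/2} m^{-1/(d+1)} \asymp n^{-1/2-1/(d+1)} = n^{-(d+3)/(2d+2)}$, the claimed rate. For $\Pr_S(A^\comp)$, conditioning on $\cX'_n$ and applying Hoeffding's inequality gives $\Pr_S(\hat p > 1/2 \mid \cX'_n) \le e^{-(n-m)/8}$ on the event $\{p^* \le 1/4\}$, while $\{p^* > 1/4\}$ is controlled by (ii); both are exponentially small in $n$ and hence negligible against the polynomial rate. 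Collecting the two terms proves the bound.

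The crux, and the main obstacle, is the geometric pair (i)--(ii) (the constants may depend on $S$, $\alpha$, $\beta$, $d$ but not on $n$). Estimate (i) says that under the $r$-rolling condition with $\alpha \le r$ the $\alpha$-convex hull of $m$ uniform points misses only a thin boundary sliver of expected volume $O(m^{-2/(d+1)})$, matching the classical rate for convex hulls of smooth convex bodies; I would prove it by localizing near $\partial S$, where the rolling condition forces $S$ to look like a body of curvature at most $1/r$, and then bounding the expected total volume of the uncovered boundary ``caps'' by an Efron-type cap-covering argument (estimate the probability that a cap of height $t$ is empty of sample points and integrate over $t$). Estimate (ii) follows from a packing argument: fix $\rho$ small enough that covering $S$ by balls of radius $\rho$ leaves a sliver of volume below $V/4$; if every such ball is hit by $\cX'_n$ then $p^* < 1/4$, and a union bound over the $O(\rho^{-d})$ balls, each empty with probability at most $(1-c\rho^d/V)^m$, gives the exponential rate. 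The delicate point throughout is to verify that the rolling condition supplies these cap- and sliver-volume estimates with $n$-free constants.
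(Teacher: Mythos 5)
Your reduction of the theorem to the two geometric inputs is correct and is, step for step, the paper's own argument: the paper defines $\tilde p = \mu(S\setminus\hat S)/\mu(S)$ (your $p^*$), uses the same identity $\mu(S)=\mu(\hat S)/(1-\tilde p)$, the same observation that $(n-m)\hat p$ is conditionally binomial, the same case analysis on whether the truncation in \eqref{V-hat} is active (the paper's three events $\{\tilde p\ge 1/4\}$, $\{\tilde p<1/4,\hat p>1/2\}$, $\{\tilde p<1/4,\hat p\le 1/2\}$ match your $A$/$A^\comp$ split once you subdivide $A^\comp$), and the same Jensen/Cauchy--Schwarz step producing $m^{-1/(d+1)}(n-m)^{-1/2}\asymp n^{-(d+3)/(2d+2)}$. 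Your inputs (i) and (ii) are precisely the paper's \lemref{hull}, namely \eqref{hull2} and (a weakened form of) \eqref{hull1}.

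The genuine gap is input (i). In the paper this is not a routine step: it is the content of \thmref{cr_d}, whose proof occupies the appendix and generalizes a published two-dimensional result to $\bbR^d$; the paper explicitly flags it as the technical heart of the work. Your ``Efron-type cap-covering argument'' is the right heuristic for the rate $m^{-2/(d+1)}$, but it does not transfer as stated: cap-covering arguments exploit convexity (a point is missed by the convex hull iff some empty half-space cap contains it), whereas here $x$ is missed by $C_\alpha(\cX'_n)$ iff \emph{some} ball of radius $\alpha$ containing $x$ is empty of sample points, and one must lower-bound $P_X(B\cap S)$ \emph{uniformly} over all such balls $B$ --- including balls that lie mostly outside $S$ when $x$ is near $\partial S$. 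This is exactly what the paper's notion of unavoidable families and Lemmas~\ref{lem:inev.Rd1} and~\ref{lem:inev.Rd2} accomplish: deep points yield cones of volume $\asymp\alpha^d$ inside every such ball, while near-boundary points yield sets of volume $\asymp\alpha^{(d-1)/2}\dist(x,\partial S)^{(d+1)/2}$, constructed using the inward rolling ball; integrating the resulting exponential bounds over $S$ gives \eqref{hull2}. Without this (or an equivalent substitute), your proof is incomplete at what you yourself identify as the crux. By contrast, your input (ii) is essentially sound, though as stated it needs one repair: hitting all covering balls makes $\cX'_n$ a $2\rho$-net, and the naive consequence (``interior points are covered'') only confines $S\setminus\hat S$ to the strip of width $\alpha/2$ around $\partial S$, which can have volume exceeding $\mu(S)/4$ (e.g., the annulus of \secref{numerics} with $\alpha=r=0.25$); one must instead show that a $2\rho$-net confines the missed set to a strip of width $O(\rho)$, which does follow from a short inscribed-ball argument, or simply invoke the paper's tail bound \eqref{hull1} with $\eps=\mu(S)/4$.
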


Comparing with \thmref{lower}, we see that our estimator \eqref{V-hat} achieves the minimax rate over the class of sets that satisfy \eqref{rolling} (and are not necessarily convex).
Our method of estimation can be easily enhanced to provide a confidence interval.  We briefly discuss this issue and perform some numerical experiments to evaluate our proposal.

\subsection{Related work}

\cite{MR0169139} consider the estimation of the area of a convex set $S \subset \bbR^2$ with bounded curvature (conditions that imply \eqref{rolling}) using the area of the sample convex hull, obtaining a precise rate of convergence in expectation of order $O(n^{-2/3})$.
\cite{MR1641826} extend their results to other sampling distributions.
Very recently, \cite{baldin2015unbiased} reconsider the case of a uniform sampling distribution, but with the added assumption that the sample size is Poisson distributed --- in which case the sample comes from a Poisson spatial process with constant intensity over the domain of interest.  Under some conditions, they derive the UMVU (uniformly of minimum variance among unbiased estimators) based on a bias correction, but without sample splitting.

\cite{MR1226450} consider the problem of volume estimation in an image model.  One of the settings they assume that $S$ is of the form $S = \{(x,y) \in [0,1]^2 : y \ge g(x)\}$ for some function $g$ with a given H\"older smoothness.  Then the data are of the form $(Z_1, Y_1), \dots, (Z_n, Y_n)$, with $Z_1, \dots, Z_n$ IID uniform in $[0,1]^2$ and $Y_i =  \xi_i \IND{Z_i \in S}$, where the $\xi_i$'s are IID Bernoulli (independent of the $X_i$'s) and represent the noise.  In this setting, they prove a lower bound and provide a rather complex estimator that achieves that lower bound within a poly-logarithmic factor.  The class of H\"older smoothness of order 2 is very close to our setting, and for that class \cite{MR1226450} obtain the same error rate as we do here.
This work is refined and extended by \cite{gayraud1997estimation}, who obtains similar results in arbitrary dimension with unknown sampling distribution.  The case of a convex support set is also covered.  The underlying method uses sample splitting.

The work of \cite{gayraud1997estimation}, complemented by that of \cite{baldin2015unbiased}, shows that the minimax estimation rate under the assumption of convexity (without smoothness assumption) is $n^{-(d+3)/(2d+2)}$, meaning, the same as the minimax estimation rate under the $r$-rolling condition (without convexity assumption).  \thmref{lower} shows, in fact, that adding to the $r$-rolling condition the assumption of convexity does not make the problem substantially easier from a minimax standpoint.

\subsection{Content}

In \secref{lower} we prove the lower bound stated in \thmref{lower}.
In \secref{estimate} we study our estimator and establish \thmref{simple}.
Some numerical experiments are presented in \secref{numerics}.
We discuss some extensions and open problems in \secref{discussion}.

\section{Minimax lower bound}
\label{sec:lower}

In this section we prove \thmref{lower}.
We employ a simple form of Le Cam's method as expounded in \cite[Lem 1]{MR1462963}.
The construction that follows is similar to that of \cite{MR1332579} for the problem of set estimation.

Consider the ball centered at the origin of radius $r_0 := \delta/2 > 2 r$, denoted $B_0$.
Let $y_1, \dots, y_m$ denote a $2\eps$-packing of $\partial B_0$ of maximal size, so that $m \asymp \eps^{-(d-1)}$, as is well-known.
The intersection of $\partial B(y_j, \eps)$ and $\partial B_0$ is the sphere $\partial B(y_j, \eps) \cap H_j$, where $H_j := \{x \in \bbR^d : \<x, y_j\> = r_0^2-\eps^2/2\}$ is a hyperplane.
Let $\theta = \acos(1 -\eps^2/2r_0^2)$, so that $2\theta$ is the aperture of the cone with apex the origin and with base $H_j \cap B_0$.  Let $C_j$ denote the corresponding infinite cone.  Define another hyperplane $K_j = \{x \in \bbR^d : \<x, y_j\> = r_0^2 - r_0 h\}$, where $h := (r_0 -r) (1 -\cos \theta) = (r_0 -r) \eps^2/2r_0^2$.  Note that $H_j$ is at distance $r_0 - \eps^2/2r_0$ from the origin, while $K_j$ is parallel to $H_j$ and at distance $r_0 - h$ from the origin.
Define the half-space $\bar K_j = \{x \in \bbR^d : \<x, y_j\> \le r_0^2 - r_0 h\}$ and then $\bar H_j$ analogously.
Let $Q_j$ denote the points $x \in B_0$ with the property that there is a ball $B$ of radius $r$ such that $x \in B \subset B_0 \cap \bar K_j$.  In other words, we remove from $B_0$ the cap defined by $K_j$ and obtain $Q_j$ by rolling a ball of radius $r$ inside the resulting set.
By construction, $B_0 \cap \bar H_j \subset Q_j \subset B_0 \cap \bar K_j$, and in particular the different sets $B_0 \cap Q_j^\comp$, as $j$ varies, do not intersect.  (The latter is because $\|y_j - y_{j'}\| > 2\eps$ when $j \ne j'$.)
See \figref{ball-cut} for an illustration in dimension $d=2$.
\begin{figure}[ht]
\centering
\def\ro{3}
\def\r{1.2}
\def\eps{2.5}
\def\h{0.625} 
\def\dh{1.95833}  
\def\dk{2.375} 
\def\th{49.24864}

\def\cx{{sqrt(2*(\ro-\r)*\h-\h*\h)}}
\def\cy{{-(\ro-\h-\r)}}
\def\au{{sqrt(\ro*\ro-\dh*\dh)}}

\begin{tikzpicture}
\colorlet{lightgray}{black!25}
\draw[densely dashed,fill=lightgray](0,0) circle (\ro);
\draw[white,fill=white](-\ro,-\dh)--(-\ro,-\ro)--(\ro,-\ro)--(\ro,-\dh);
\fill[lightgray] (\cx,\cy) circle (\r);
\fill[lightgray] (-\cx,\cy) circle (\r);
\draw[lightgray,fill=lightgray](-\cx,0)--(-\cx,-\dk)--(\cx,-\dk)--(\cx,0);


\draw[densely dashed](0,0) circle (\ro); 
\draw [densely dashed, domain=-3:3] plot(\x,{(-\dh-0*\x)/1}) node[right,above=-1.5mm] {\scriptsize  $H_j$}; 
\draw [densely dashed, domain=-3:3] plot(\x,{(-\dk-0*\x)/1}) node[right,above=-1.5mm] {\scriptsize  $K_j$}; 

\fill (0,-\ro) circle (1pt) node[below] {\scriptsize $y_j$};

\draw[<->,color=black] (-\ro,0) -- (0,0) node[midway,above] {\scriptsize  $r_0$};
\draw[<->,color=black] (0,-\dk) -- (0,0) node[midway,left] {\scriptsize  $r_0-h$};
\draw[densely dashed,color=black] (\au,-\dh) -- (0,0) ;
\draw[densely dashed,color=black] (\cx,\cy) circle (\r);
\draw[<->,color=black] (\cx,\cy) -- (\au,-\dh)  node[midway,above] {\scriptsize  $r$};
\draw[densely dashed](0,0) +(270:0.3cm) arc (270:{\th+270}:0.3cm);
\node at (0.15,-0.4) {\scriptsize  $\theta$};

\draw[thick](0,0) +({270-\th}:\ro) arc ({270-\th}:{-90+\th}:\ro);
\draw[thick](\cx,\cy) +(270:\r) arc (270:{270+\th}:\r);
\draw[thick](-\cx,\cy) +(270:\r) arc (270:{270-\th}:\r);
\draw[thick](-\cx,-\dk)--(\cx,-\dk);
\end{tikzpicture}
\caption{
The ball $B_0 = B(0, r_0)$ is smoothly `dented' to obtain $Q_j$, represented in gray.
}
\label{fig:ball-cut}
\end{figure}
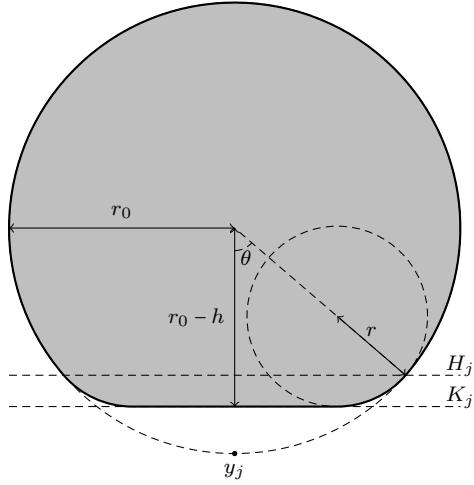

For $\omega = (\omega_1, \dots, \omega_m) \in \{0, 1\}^m$, let
\beq
S_\omega = B_0 \cap \bigcap_{\{j : \omega_j = 1\}} Q_j.
\eeq
By construction, for any $\omega$, both $S_\omega^\comp$ and $S_\omega$ satisfy the $r$-rolling condition, the latter being convex by \lemref{convex} in the Appendix.

Let $\Pi_\ell$ denote the uniform distribution on $\Omega := \{\omega: |\omega|_1 = \ell\}$, where for $\omega = (\omega_1, \dots, \omega_m) \in \{0, 1\}^m$, we let $|\omega|_1 = \sum_j \omega_j$.
The parameter $\ell$ will be chosen later on.
Define $\eta = \mu(B_0) - \mu(Q_j) = \mu(B_0 \setminus Q_j)$.
By \cite[Lem 1]{MR1462963},
\beq\label{LeCam}
\inf_{\varphi} \sup_{S \in \cC_r(\delta)} \E_S |\varphi(\cX_n) - \mu(S)| \geq \tfrac12 \ell\eta\, \Big(1 - \tfrac12 {\rm TV}(P_0^{\otimes n}, P_1^{\otimes n})\Big),
\eeq
where $P_0$ is the uniform distribution on $B_0$, $P_1$ is the mixture of $\P_{S_\omega}$ when $\omega \sim \Pi_\ell$, and ${\rm TV}$ denotes the total variation metric for distributions.  This is the bound we work with.

We first bound $\eta$, from below but also from above, as this will be needed later on.
Let $\gamma$ denote the angle associated to $K_j$ as $\theta$ is associated to $H_j$, and note that $\gamma = \acos(1 - h^2/r_0^2)$.
We will take $\eps$ small, and as $\eps \to 0$ we have that $m \to \infty$, $\theta \sim \eps/r_0$, and $\gamma \sim \sqrt{2 h/r_0}$.

The volume of a cap of the unit ball in $\bbR^d$ at distance $1-t$ from the origin is equal to
\beq
\frac{\pi^{\frac{d-1}{2}}}{\Gamma\left(\frac{d+1}{2}\right)}\int_0^{\arccos(1-t)}\sin^d(x){\rm d}x \asymp t^{(d+1)/2}, \quad t \to 0.
\eeq
Using this, as $\eps \to 0$,
\beq\begin{split} \label{eta}
\eta &\le \mu(B_0 \setminus \bar H_1) \asymp \eps^{d+1}, \\
\eta &\ge \mu(B_0 \setminus \bar K_1) \asymp h^{(d+1)/2} \asymp \eps^{d+1}, \quad \text{since } h \asymp \eps^2.
\end{split}\eeq

Define
\beq
Z = \frac{{\rm d} P_1^{\otimes n}(\cX_n)}{{\rm d} P_0^{\otimes n}(\cX_n)} = (1 - \ell \eta/\zeta_d r_0^d)^{-n} \ \frac{1}{|\Omega|} \sum_{\omega \in \Omega} \IND{\cX_n \subset S_\omega},
\eeq
where $\zeta_d$ is the volume of the unit ball in dimension $d$.
Then
\beq\label{TV}
{\rm TV}(P_0^{\otimes n}, P_1^{\otimes n}) = \E_0[|Z -1|] \le\sqrt{\E_0(Z^2) -1},
\eeq
where the inequality is Cauchy-Schwarz's.
We have
\beq
\E_0(Z^2) = (1 - \ell \eta/\zeta_d r_0^d)^{-2n} \ \frac{1}{|\Omega|^2} \sum_{\omega, \omega' \in \Omega} \E_0(\IND{\cX_n \subset S_{\omega}} \IND{\cX_n \subset S_{\omega'}}),
\eeq
with
\beq
\E_0(\IND{\cX_n \subset S_{\omega}} \IND{\cX_n \subset S_{\omega'}})
= \E_0(\IND{\cX_n \subset S_{\omega} \cap S_{\omega'}})
= \big(1 -(2\ell - |\omega \wedge \omega'|_1) \eta/\zeta_d r_0^d\big)^{n},
\eeq
where $\omega \wedge \omega' = (\omega_1 \wedge \omega'_1, \dots, \omega_m \wedge \omega'_m)$ when $\omega = (\omega_1, \dots, \omega_m)$ and $\omega' = (\omega'_1, \dots, \omega'_m)$.
Noting that $|\omega \wedge \omega'|_1$ has the hypergeometric distribution with parameters $(m, \ell, \ell)$ when $w,w'$ are IID with distribution $\Pi_\ell$, and letting $V$ denote a random variable with that distribution, we have
\beq\begin{split}
\E_0(Z^2)
&= (1 - \ell \eta/\zeta_d r_0^d)^{-2n} \E\Big[\big(1 -(2\ell - V) \eta/\zeta_d r_0^d\big)^n\Big] \\
&= \E \bigg[\bigg(\frac{1 -(2\ell - V) \eta/\zeta_d r_0^d}{(1 - \ell \eta/\zeta_d r_0^d)^2}\bigg)^n\bigg] \\
&\le \E \bigg[\Big(1 + V \eta/\zeta_d r_0^d + 10 (\ell \eta/\zeta_d r_0^d)^2\Big)^n\bigg] \\
&\le \exp(10 (\ell \eta/\zeta_d r_0^d)^2 n) \E \big[\exp(n V \eta/\zeta_d r_0^d)\big],
\end{split}
\eeq
where in the third line we assumed that $\ell \eta/\zeta_d r_0^d \le 1/2$.
The function $x \mapsto e^{a x}$ (with $a > 0$ fixed) being convex, we may apply~\cite[Th 4]{hoeffding} to bound the last expectation by
\beq
\E \big[\exp(n W \eta/\zeta_d r_0^d)\big],
\eeq
where $W$ is binomial with parameters $(\ell, \ell/m)$.
We then continue
\beq
\E \big[\exp(n W \eta/\zeta_d r_0^d)\big]
= \Big(1 - \tfrac{\ell}{m} + \tfrac{\ell}{m} e^{n \eta/\zeta_d r_0^d}\Big)^\ell
\le \exp\Big[\tfrac{\ell^2}{m} e^{n \eta/\zeta_d r_0^d}\Big].
\eeq
Therefore, we conclude that $\E_0(Z^2) \le 2$ when
\beq\label{cond1}
\exp(10 (\ell \eta/\zeta_d r_0^d)^2 n) \le \sqrt{2},
\eeq
(which implies $\ell \eta/\zeta_d r_0^d \le 1/2$) and when
\beq\label{cond2}
\exp\big[\tfrac{\ell^2}{m} e^{n \eta/\zeta_d r_0^d}\big] \le \sqrt{2}.
\eeq
From \eqref{eta}, we know there is a constant $c_0$ such that $\eta/\zeta_d r_0^d \le c_0 \eps^{d+1}$.  Hence \eqref{cond1} and \eqref{cond2} are implied by
\beq\begin{split}
\ell^2 \le \tfrac1{10 c_0^2} \log(\sqrt{2})/n\eps^{2d+2}, \quad
\ell^2 \le \eps^{1-d} e^{-c_0 n \eps^{d+1}} \log(\sqrt{2}).
\end{split}\eeq
Taking $\eps = n^{-1/(d+1)}$, we can see that we may set $\ell = [c n^{(d-1)/(2d+2)}]$ with $c > 0$ a sufficiently small constant.
Note that $\eta \asymp 1/n$ with this choice of $\eps$ by \eqref{eta}.
This guarantees that, $n$ being large enough, $\E_0(Z^2) \le 2$, and when this is the case, from \eqref{TV}, the RHS of \eqref{LeCam} is lower-bounded by $\eta\ell/4 \asymp (1/n) n^{(d-1)/(2d+2)} = n^{-(d+3)/(2d+2)}$, which concludes the proof of \thmref{lower}.

\section{Performance analysis for our estimator}
\label{sec:estimate}

In this section we analyze our estimator $\hat{V}$ defined in \eqref{V-hat} and prove \thmref{simple}.
We start with \thmref{cr_d}, which bounds $\E\big[\mu(C_\alpha(\cX_n) \symd S)\big]$.  ($\symd$ denotes the symmetric difference).
\thmref{cr_d} generalizes Theorem~1 in \citep{patrod13} to the $d$-dimensional Euclidean space. Although some arguments in the proof of the result (see the Appendix) are analogous to those used in the bidimensional case, the proof of  \thmref{cr_d} is not just an extension of the existing proof in that paper.  In particular, see Lemmas~\ref{lem:inev.Rd1} and~\ref{lem:inev.Rd2}.  The proof is based on unpublished work in \citep{bea-tesis}.

\begin{thm}\label{thm:cr_d}
Let $S$ be a nonempty compact subset of $\mathbb{R}^d$ such that both $S$ and $S^{c}$ satisfy the $r$-rolling condition. Let $X$ be a random variable with probability distribution $P_X$ and support $S$. We assume that the probability distribution $P_X$ satisfies that there exists $\delta>0$ such that $P_X(C)\geq\delta\mu(C\cap S)$ for all Borel subsets $C\subset\mathbb{R}^d$. Let $\mathcal{X}_n=\{X_1,\ldots,X_n\}$ be a random sample from $X$ and let $\{\alpha_n\}$ be a sequence of positive numbers which do not depend on the sample and such that $\alpha_n\leq r$. If the sequence $\{\alpha_n\}$ satisfies
\begin{equation}\label{conv.rn}
\lim_{n\rightarrow\infty}\frac{n\alpha_n^d}{\log n}\rightarrow\infty,
\end{equation}
then
\begin{equation}\label{cotaEE}
\E\big[\mu(C_{\alpha_n}(\cX_n) \symd S)\big]=O\left(\alpha_n^{-(d-1)/(d+1)}n^{-2/(d+1)}\right).
\end{equation}
\end{thm}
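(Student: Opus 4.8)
The plan is to reduce the symmetric difference to a one-sided quantity and then bound it by integrating a pointwise ``uncovering'' probability against the distance to the boundary. Since $S$ is $r$-convex and $\alpha_n \le r$, the set $S$ is also $\alpha_n$-convex (shrink an external $r$-ball to a radius-$\alpha_n$ ball still avoiding $\bar S$), so $C_{\alpha_n}(S) = S$; because $\cX_n \subset S$ this forces $C_{\alpha_n}(\cX_n) \subseteq S$, and hence $\mu(C_{\alpha_n}(\cX_n) \symd S) = \mu(S \setminus C_{\alpha_n}(\cX_n))$. By Fubini,
\[
\E\big[\mu(S \setminus C_{\alpha_n}(\cX_n))\big] = \int_S \P\big(y \notin C_{\alpha_n}(\cX_n)\big)\, {\rm d}\mu(y),
\]
where, by definition of the $\alpha_n$-convex hull, $y \notin C_{\alpha_n}(\cX_n)$ holds iff there is an open ball $B$ of radius $\alpha_n$ with $y \in B$ and $B \cap \cX_n = \emptyset$.

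The key geometric step is a uniform lower bound on the volume that any such witnessing ball must leave free of sample points. For $y \in S$ at depth $t := \mathrm{dist}(y, \partial S)$, I would use the inner rolling condition (coming from $S^\comp$ being $r$-convex) together with $\alpha_n \le r$ to show that $\partial S$ is flat enough at scale $\alpha_n$ that the relevant cap estimate applies: $\mu(B \cap S) \gtrsim \alpha_n^{(d-1)/2}\, t^{(d+1)/2}$ when $t \le \alpha_n$ (a cap of a radius-$\alpha_n$ ball of height $\asymp t$ and base radius $\asymp \sqrt{\alpha_n t}$), and $\mu(B \cap S) \gtrsim \alpha_n^d$ when $t > \alpha_n$. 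Since the family of admissible balls is uncountable, I would reduce it to a finite net of candidate balls and transfer the volume lower bound uniformly over positions. This reduction, together with controlling the deviation of $\partial S$ from a hyperplane at the relevant scale, is the main obstacle and exactly where the $d$-dimensional argument departs from the planar case; it is the content of Lemmas~\ref{lem:inev.Rd1} and~\ref{lem:inev.Rd2}.

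Granting the volume bound, the probabilistic part is routine. Using the hypothesis $P_X(C) \ge \delta\,\mu(C \cap S)$, a fixed candidate ball is empty with probability at most $(1 - \delta\,\mu(B \cap S))^n \le \exp(-\delta n\, \mu(B \cap S))$, and a union bound over the net (of cardinality polynomial in $1/\alpha_n$) costs a factor absorbed by the growth condition \eqref{conv.rn}. This gives $\P(y \notin C_{\alpha_n}(\cX_n)) \lesssim \exp\!\big(-c\delta n\, \alpha_n^{(d-1)/2} t^{(d+1)/2}\big)$ near the boundary, while points with $t \gtrsim \alpha_n$ contribute at most $\exp(-c\delta n\, \alpha_n^d)$, which is negligible by \eqref{conv.rn}. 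Foliating $S$ by the distance to $\partial S$ then yields
\[
\E\big[\mu(S \setminus C_{\alpha_n}(\cX_n))\big] \lesssim \mathcal{H}^{d-1}(\partial S) \int_0^\infty \exp\!\big(-c\delta n\, \alpha_n^{(d-1)/2} t^{(d+1)/2}\big)\, {\rm d}t .
\]

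Finally, the substitution $s = c\delta n\, \alpha_n^{(d-1)/2} t^{(d+1)/2}$ evaluates the integral as a constant multiple of $\big(n\, \alpha_n^{(d-1)/2}\big)^{-2/(d+1)} \asymp \alpha_n^{-(d-1)/(d+1)} n^{-2/(d+1)}$, and $\mathcal{H}^{d-1}(\partial S) = O(1)$ since $\partial S$ has reach $\ge r$ and $S$ is compact. This produces the claimed rate. To summarize, the coarea decomposition and the empty-ball probability bound are mechanical once the sharp, position-uniform lower bound on $\mu(B \cap S)$ is in hand; I expect essentially all the difficulty to concentrate in that geometric lemma.
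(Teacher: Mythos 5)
Your skeleton coincides with the paper's: reduce to the one-sided difference $\mu(S\setminus C_{\alpha_n}(\cX_n))$ (valid, since $\alpha_n\le r$ makes $S$ $\alpha_n$-convex), apply Fubini, bound the pointwise ``uncovering'' probability by an exponential in $n$ times the cap volume $\alpha_n^{(d-1)/2}t^{(d+1)/2}$, split at depth $t\asymp\alpha_n$, and integrate along the distance-to-boundary foliation; your lens/cap volume estimate and the final integral are also exactly the paper's. The genuine gap is the step you compress into ``reduce to a finite net of candidate balls \ldots of cardinality polynomial in $1/\alpha_n$ \ldots costs a factor absorbed by the growth condition \eqref{conv.rn}''. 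That reduction, as you describe it, fails. With a net at scale comparable to $\alpha_n$ (constant cardinality), the implication ``some admissible ball is empty $\Rightarrow$ some shrunken net ball is empty'' is true but useless: when the witnessing ball $B\ni y$ only grazes $S$ (small depth $t$), the shrunken net ball inside $B$ can lie entirely in $S^\comp$, so it has $P_X$-mass zero and the union bound $\sum_{B'}(1-P_X(B'))^n$ is vacuous. To preserve the mass bound under shrinking you need spacing $\epsilon\lesssim \alpha_n^{-(d-1)/2}t^{(d+1)/2}$, hence a net whose cardinality at the critical depth $t^*\asymp (n\alpha_n^{(d-1)/2})^{-2/(d+1)}$ is polynomial in $n\alpha_n^d$. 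This factor is \emph{not} absorbed by \eqref{conv.rn}: that condition only kills the deep-interior term $e^{-cn\alpha_n^d}$. In the near-boundary regime, which is what determines the rate, integrating $\min\bigl(1, N\exp(-cn\alpha_n^{(d-1)/2}t^{(d+1)/2})\bigr)$ in $t$ inflates the answer by a factor of order $(\log N)^{2/(d+1)}$, so this route yields \eqref{cotaEE} only up to a polylogarithmic loss, not the clean bound claimed.

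The missing idea --- and the actual content of Lemmas~\ref{lem:inev.Rd1} and~\ref{lem:inev.Rd2}, which you cite but mischaracterize as a net of balls --- is the notion of an \emph{unavoidable family} (\defref{inev2}): a family of test sets of \emph{constant} cardinality (depending only on $d$), anchored at the point $x$ itself, such that every ball of radius $\alpha_n$ containing $x$ must contain one of them. For interior points these are truncated cones $C^\theta_{u,\alpha_n/2}(x)$ with apex $x$ and finitely many axis directions; near the boundary they are cones and cone-slab sets carved inside the inner rolling ball $B(P_\Gamma x-r\eta,r)$, each of $P_X$-mass $\gtrsim \alpha_n^{(d-1)/2}\dist(x,\partial S)^{(d+1)/2}$. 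Because such anchored, pointed sets sit inside \emph{every} admissible ball simultaneously, the union bound costs only the constant $m_1\vee m_2$, and the clean rate follows. Without this (or an equivalent constant-cost discretization), your argument does not establish the theorem as stated.
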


We use \thmref{cr_d} and other results in \citep{poincare} to establish \lemref{hull} below.
\thmref{cr_d} is in fact a bit more than what we need, and is really only used to obtain the bound \eqref{hull2} --- by taking $P_X$ to be the uniform distribution on $S$ and $\alpha_n=\alpha\in (0,r]$.

\begin{lem} \label{lem:hull}
Let $S \subset \bbR^d$ be compact and satisfy \eqref{rolling}.
Also, let $\cX_n$ denote a sample of size $n$ from the uniform distribution on $S$.
For any $\alpha \in (0, r]$, there is a constant $c > 0$ not depending on $n$ such that
\beq\label{hull1}
\P(\mu(C_\alpha(\cX_n) \symd S) > \eps) \le c\, \eps^{-d/2} \exp(-n \eps^{(d+1)/2}/c), \quad \forall \eps > 0,
\eeq
and also,
\beq\label{hull2}
\E\big[\mu(C_\alpha(\cX_n) \symd S)\big] \le c n^{-2/(d+1)}.
\eeq
\end{lem}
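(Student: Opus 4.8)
The plan is to prove \lemref{hull} by establishing the tail bound \eqref{hull1} first, and then deriving the expectation bound \eqref{hull2} as a consequence via integration over the tail (with \thmref{cr_d} available as an alternative, more direct route to \eqref{hull2}). I would begin by recalling the key geometric fact about the $\alpha$-convex hull under the rolling condition: when $\alpha \le r$, the set $S$ is itself $\alpha$-convex, so $C_\alpha(\cX_n) \subseteq S$ always holds (the hull of a subset of $S$ cannot escape $S$). This means the symmetric difference reduces to a one-sided quantity, $\mu(C_\alpha(\cX_n) \symd S) = \mu(S \setminus C_\alpha(\cX_n))$, which is the volume of the region not yet ``captured'' by the hull.

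\medskip

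\noindent\textbf{Tail bound.} The core of \eqref{hull1} is a covering argument. The region $S \setminus C_\alpha(\cX_n)$ is large only if there is some location near $\partial S$ where the sample fails to populate a neighborhood densely enough for the $\alpha$-hull to fill in. First I would quantify how much uncaptured volume a single ``gap'' in the sample near the boundary can contribute: by the rolling condition, the boundary is smooth at scale $r$, so an uncaptured pocket of volume of order $\eps$ forces an empty region (free of sample points) of $P_X$-measure bounded below by something like $\eps^{(d+1)/2}$ --- this exponent is exactly the cap-volume scaling seen in \eqref{eta}, since a boundary pocket of linear extent $t$ has volume $\asymp t^{d+1}$ while the corresponding empty ball has volume $\asymp t^{d}$ along the boundary, giving the $(d+1)/2$ power. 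I would cover $\partial S$ by $\asymp \eps^{-(d-1)/2}$ boundary patches (consistent with the $\eps^{-d/2}$ prefactor after accounting for the union bound and the normalizing measure), and on each patch bound the probability that it is empty of sample points by $(1 - c'\eps^{(d+1)/2})^n \le \exp(-c' n \eps^{(d+1)/2})$. A union bound over the patches then yields a bound of the promised form $c\,\eps^{-d/2}\exp(-n\eps^{(d+1)/2}/c)$.

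\medskip

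\noindent\textbf{From tail to expectation.} Once \eqref{hull1} is in hand, I would obtain \eqref{hull2} by writing
\beq
\E\big[\mu(C_\alpha(\cX_n) \symd S)\big] = \int_0^\infty \P\big(\mu(C_\alpha(\cX_n) \symd S) > \eps\big)\, \mathrm{d}\eps,
\eeq
splitting the integral at the threshold $\eps_n \asymp (\log n / n)^{2/(d+1)}$ where the exponent $n\eps^{(d+1)/2}$ crosses $\log n$. Below $\eps_n$ the integrand is at most $1$, contributing $O(\eps_n) = O\big((\log n/n)^{2/(d+1)}\big)$; above $\eps_n$ the exponential decay makes the tail integral negligible by comparison. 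This gives a bound of order $(\log n/n)^{2/(d+1)}$, which carries a spurious logarithmic factor. To remove it and reach the clean rate $n^{-2/(d+1)}$ claimed in \eqref{hull2}, I would instead invoke \thmref{cr_d} directly with $P_X$ the uniform distribution on $S$ (which satisfies the measure-lower-bound hypothesis with $\delta = 1/\mu(S)$) and the constant sequence $\alpha_n = \alpha \in (0,r]$; condition \eqref{conv.rn} holds trivially since $\alpha$ is fixed, and \eqref{cotaEE} then yields $\E[\mu(C_\alpha(\cX_n)\symd S)] = O(n^{-2/(d+1)})$ without the logarithm.

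\medskip

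\noindent The main obstacle is the geometric lemma underlying the tail bound: verifying rigorously that an uncaptured boundary pocket of volume $\eps$ really forces a sample-free region of $P_X$-measure $\gtrsim \eps^{(d+1)/2}$. This requires the full strength of the rolling condition on both $S$ and $S^\comp$ --- the inner rolling ball guarantees that the $\alpha$-hull fills in any region that can be reached by rolling an $\alpha$-ball through captured points, while the outer rolling condition controls the curvature of $\partial S$ so that the cap-volume scaling is exactly $t^{d+1}$ versus $t^d$. Getting the correspondence between the linear scale, the pocket volume, and the empty-ball measure precise (and uniform over the boundary) is the delicate step; everything downstream is routine union bounds and the tail-integration identity.
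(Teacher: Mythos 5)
Your treatment of \eqref{hull2} coincides exactly with the paper's: invoke \thmref{cr_d} with $P_X$ the uniform distribution on $S$ (so its hypothesis holds with $\delta = 1/\mu(S)$) and the constant sequence $\alpha_n \equiv \alpha \in (0,r]$, for which \eqref{conv.rn} is trivial and \eqref{cotaEE} gives the clean rate $O(n^{-2/(d+1)})$; the paper itself notes that \thmref{cr_d} is ``really only used'' for \eqref{hull2}, and your observation that integrating the tail bound would leave a spurious logarithmic factor is precisely why. Your preliminary reduction to the one-sided quantity $\mu(S \setminus C_\alpha(\cX_n))$, via $\alpha$-convexity of $S$ for $\alpha \le r$, is also sound and is used in the paper (\remref{plug-in}).

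The gap is in your argument for \eqref{hull1}, which the paper does not prove internally (it is attributed to results in \citep{poincare}), and it sits exactly at the step you flag as delicate. Two concrete problems. First, your exponent heuristic is inconsistent: if a pocket of volume $\eps \asymp t^{d+1}$ forced an empty region of measure $\asymp t^{d}$, the resulting bound would be $\exp(-c'n\,\eps^{d/(d+1)})$, not $\exp(-n\eps^{(d+1)/2}/c)$. The correct chain is different: positive reach of $\partial S$ gives $\mu(\{x \in S : \dist(x,\partial S) \le h\}) = O(h)$, so $\mu(S \setminus C_\alpha(\cX_n)) > \eps$ forces an uncovered point at \emph{depth} $\rho \gtrsim \eps$, and by the unavoidable-family bound (\lemref{inev.Rd2}, with $\alpha$ fixed) such a point forces one of at most $m_2$ sets, each of $P_X$-measure $\gtrsim \alpha^{(d-1)/2}\rho^{(d+1)/2} \gtrsim \eps^{(d+1)/2}$, to contain no sample point. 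Second, your union bound does not attach to the bad event: $\{\mu(S \setminus C_\alpha(\cX_n)) > \eps\}$ does not imply that any \emph{fixed} boundary patch is empty of sample points, because the ball $B(y,\alpha)$ witnessing non-coverage has a sample-dependent center $y$. To run a union bound one must first exhibit a finite, sample-independent collection of sets, each of measure $\gtrsim \eps^{(d+1)/2}$, such that the bad event forces at least one of them to be empty --- this is what the unavoidable families attached to a net of points (at a scale dictated by $\eps$) provide, and the cardinality of that collection is what produces the $\eps^{-d/2}$ prefactor (your count $\eps^{-(d-1)/2}$ of boundary patches does not match it). These repairs --- the tube-volume argument plus unavoidable families with a discretization --- are exactly the machinery developed in the paper's appendix for \thmref{cr_d}, so a complete proof can be assembled from it; but as written, your covering sketch bounds the probability of the wrong event and does not yield \eqref{hull1}.
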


\begin{rem}
\label{rem:plug-in}
Assuming that $r$ is known and that we choose $\alpha$ accordingly (for example, $\alpha=r$), we have $C_\alpha(\cX_n) \subset S$, which implies
\beq
0 \le \mu(S) - \mu(C_\alpha(\cX_n)) \le \mu(C_\alpha(\cX_n) \symd S),
\eeq
so that, by \eqref{hull2}, the plug-in estimator $\mu(C_\alpha(\cX_n))$ achieves the error rate $O(n^{-2/(d+1)})$.
We conjecture that this is sharp, and if so, the plug-in estimator  does not achieves the error rate obtained in \thmref{lower}, not even within a poly-logarithmic factor.
\end{rem}

\begin{proof}[Proof of \thmref{simple}]
Recall the process leading to $\hat{V}$ in \eqref{V-hat}.
Let $\tilde p = \mu(S \setminus \hat S)/\mu(S)$ and note that $\mu(S) = \mu(\hat S)/(1-\tilde p)$.
Let $V = \mu(S)$, which is what we want to estimate, and let $\hat V_0 = \mu(\hat S)$, which is the plug-in estimate.  Note that $V = \hat V_0/(1-\tilde p)$, so that
\beq
\tilde p = (V - \hat V_0)/V \le \mu(\hat S \symd S).
\eeq
We have
\beq\begin{split}
\E_S |\hat V - V|
&= \E_S\big[|\hat V - V| \, \IND{\tilde p \ge 1/4}\big] \\
& \quad + \E_S \big[|\hat V - V| \, \IND{\tilde p < 1/4, \hat p > 1/2}\big] \\
& \qquad + \E_S \big[|\hat V - V| \, \IND{\tilde p < 1/4, \hat p \le 1/2}\big] .
\end{split}\eeq


Because $\hat V \le 2 \hat V_0 \le 2 V$,
\beq\begin{split}
\E_S\big[|\hat V - V| \, \IND{\tilde p \ge 1/4}\big]
&\le V \P\big(\tilde p \ge 1/4\big) \\
&\le V \P\big(\mu(\hat S \symd S) \ge V/4\big) \\
&\le c_1 e^{-n/c_1},
\end{split}\eeq
for some constant $c_1 > 0$, by \lemref{hull} and the fact that $m \ge \beta n$.

Similarly, we have
\beq\begin{split}
\E_S \big[|\hat V - V| \, \IND{\tilde p < 1/4, \hat p > 1/2}\big]
&\le V \P\big(\tilde p < 1/4, \hat p > 1/2\big) \\
&\le V \P\big(\hat p > 1/2 ~\big|~ \tilde p < 1/4 \big) \\
&\le c_2 e^{-n/c_2},
\end{split}\eeq
for a constant $c_2 >0$, using the fact that, given $\tilde p$, $(n-m) \hat p  \sim \Bin(n-m, \tilde p)$, and applying Bernstein's inequality together with the fact that $n-m \ge \beta n$.

Finally, using the fact that
\beq
|\hat V - V| = \hat V_0 \frac{|\hat p - \tilde p|}{(1-\hat p)(1-\tilde p)} \le \tfrac83 V |\hat p - \tilde p|,
\eeq
when $\tilde p < 1/4$ and $\hat p \le 1/2$, we have
\beq\begin{split}
\E_S \big[|\hat V - V| \, \IND{\tilde p < 1/4, \hat p \le 1/2}\big]
&\le \tfrac83 V \E_S\big[|\hat p - \tilde p|\big] \\
&\le \tfrac83 V \sqrt{\E_S\big[(\hat p - \tilde p)^2\big]} \\
&= \tfrac83 V \sqrt{\E_S\big[\tilde p(1 - \tilde p)/(n/2)\big]} \\
&\le c_3 \sqrt{(1/n)^{2/(d+1)}/n} = c_3 n^{-(d+3)/(2d+2)},
\end{split}\eeq%
for a constant $c_3 >0$, using the Cauchy-Schwarz Inequality, the fact that, given $\tilde p$, $(n-m) \hat p  \sim \Bin(n-m, \tilde p)$, and \lemref{hull}.

Combining all bounds, and noticing that $(d+3)/(2d+2) \le 1$ for all $d \ge 1$, proves the result.
\end{proof}

\begin{rem} \label{rem:simple}
This estimator relies heavily on the fact that the sampling distribution is uniform.  If this is not the case, it can be biased downward or upward.  For example, suppose that $S$ is the unit disc in dimension 2 and that the sampling distribution has the following density
\beq
f(x) = \frac{a \IND{\|x\| \le 1/2} + b \IND{1/2 < \|x\| \le 1}}{\tfrac\pi4 a + \tfrac{3\pi}4 b},
\eeq
where $a,b > 0$.  In that case, with high probability as $n$ becomes large,
\beq
\tilde p = c \mu(S \setminus C_\alpha(\cX'_n))/\mu(S), \quad c:= \frac{b}{a/4 + 3b/4},
\eeq
and by varying $a$ and $b$, $c$ can be any real in $(0,4/3)$.  If $c < 1$, the estimator remains biased downward, while if $c > 1$, it is biased upward, achieving the same rate as the plug-in estimator of \remref{plug-in}.
\end{rem}

\paragraph{Confidence intervals}
Our procedure lends itself naturally to the computation of confidence intervals.  Indeed, we can see that it boils down to computing a confidence interval for $\tilde p$ based on the second half of the sample.  A natural interval is based on $\hat p$, which given $\tilde p$ satisfies $(n-m) \hat p  \sim \Bin(n-m, \tilde p)$.

\section{Numerical experiments}
\label{sec:numerics}

Here we discuss the results of a simulation study that illustrates the performance of the proposed volume estimator $\hat{V}$ of \eqref{V-hat}.
We consider the set
\beq\label{S}
S=\{x\in\mathbb{R}^2:\ 0.25\leq \|x\|\leq 1\}.
\eeq
Note that $S$ is $r$-convex for $r=0.25$ and $\mu(S)=\pi(1-0.25^2)$.

In the first experiment, we generate a sample of size $n$ from the uniform distribution on $S$ and calculate the estimator $\hat{V}$. We consider different sizes $m$ for the subsample $\cX'_n$ (different ways of splitting the sample) and different values of $\alpha$.
Each setting is repeated $B=500$ times.
\figref{eb} shows the mean values $|\mu(S) - \hat V|/\mu(S)$ over the $B$ repeats (error bars represent one standard deviation).  We do the same for $\mu(C_\alpha(\cX_n))$ instead of $\hat V$.

\begin{figure}[!ht]
\centering
\includegraphics[width=0.32\textwidth]{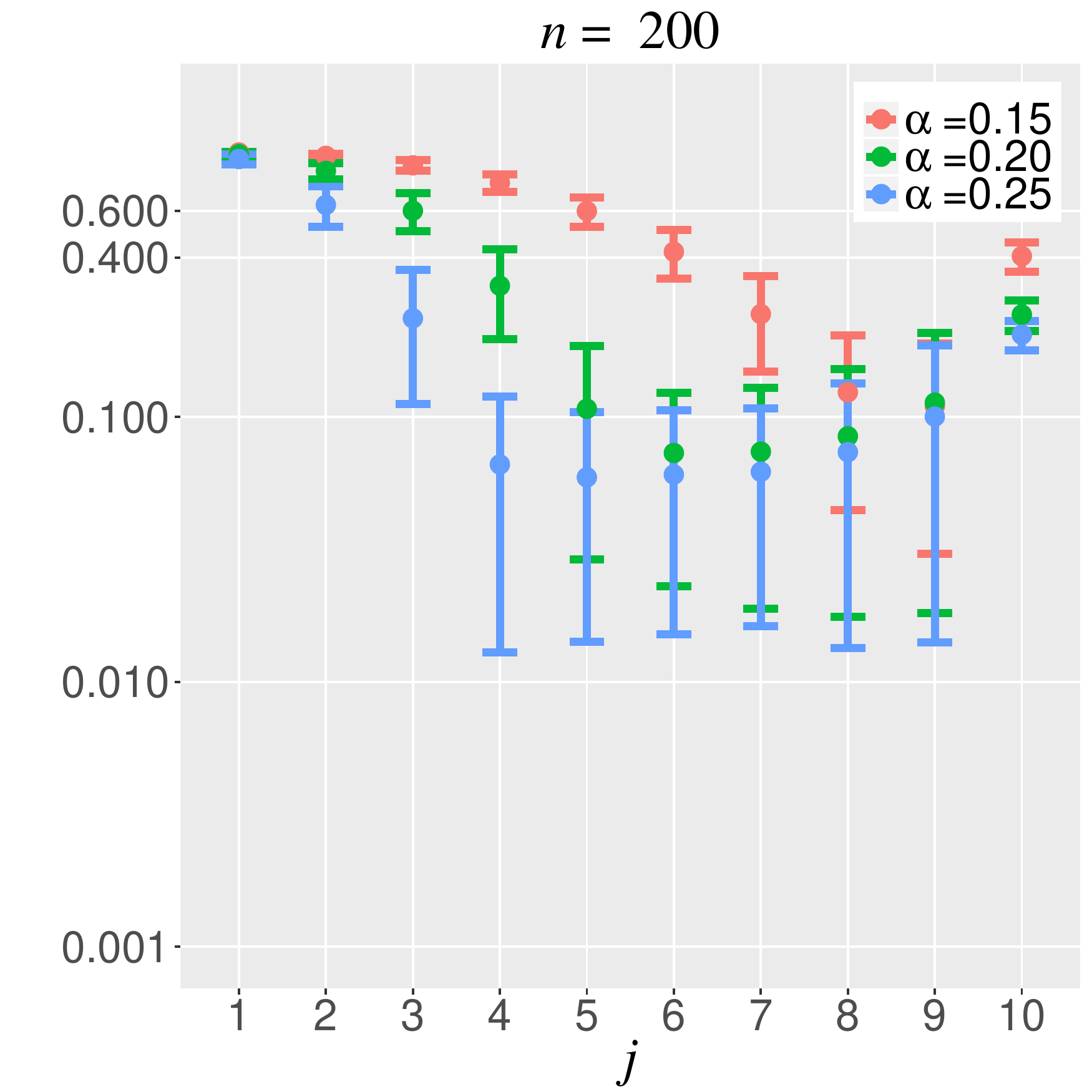}
\includegraphics[width=0.32\textwidth]{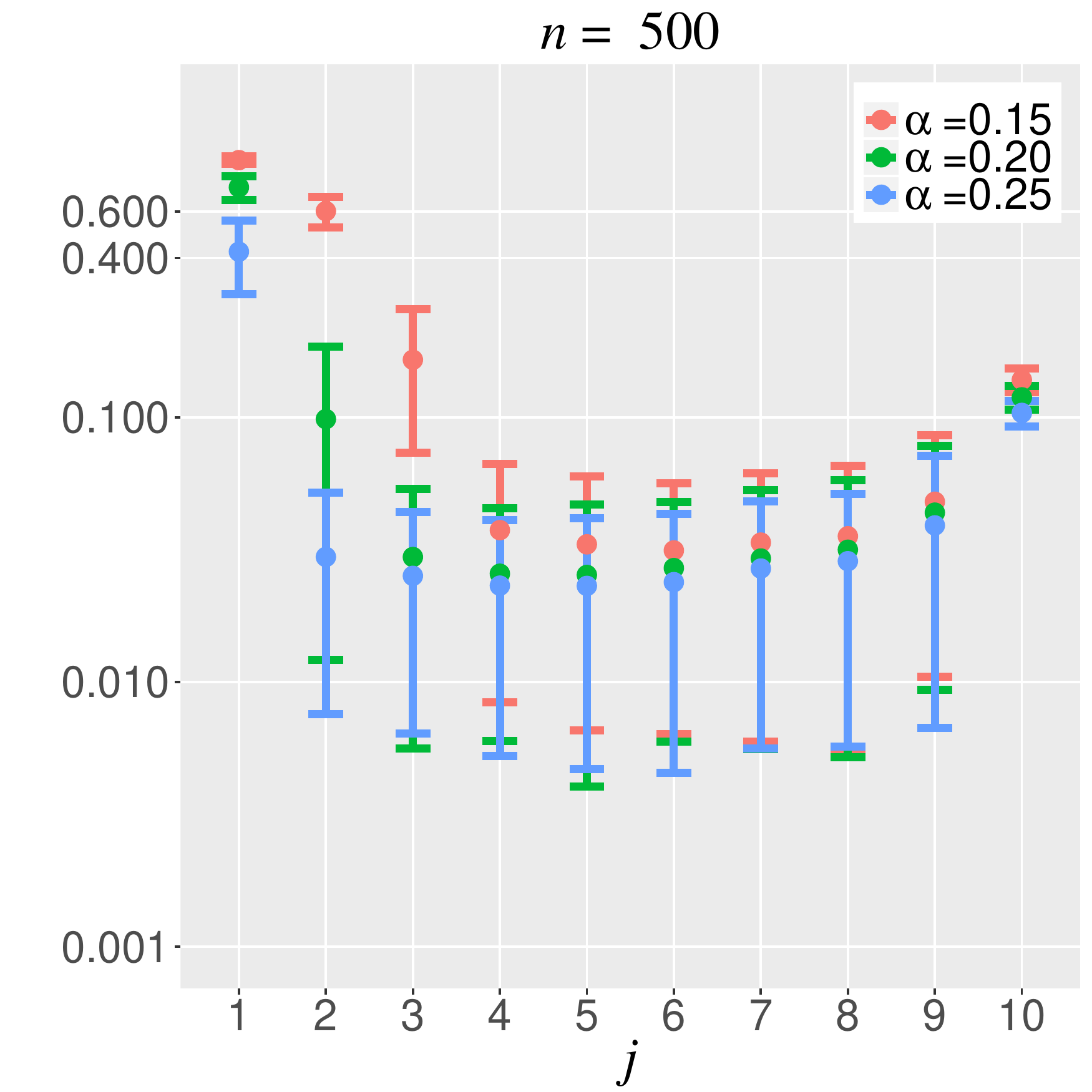}
\includegraphics[width=0.32\textwidth]{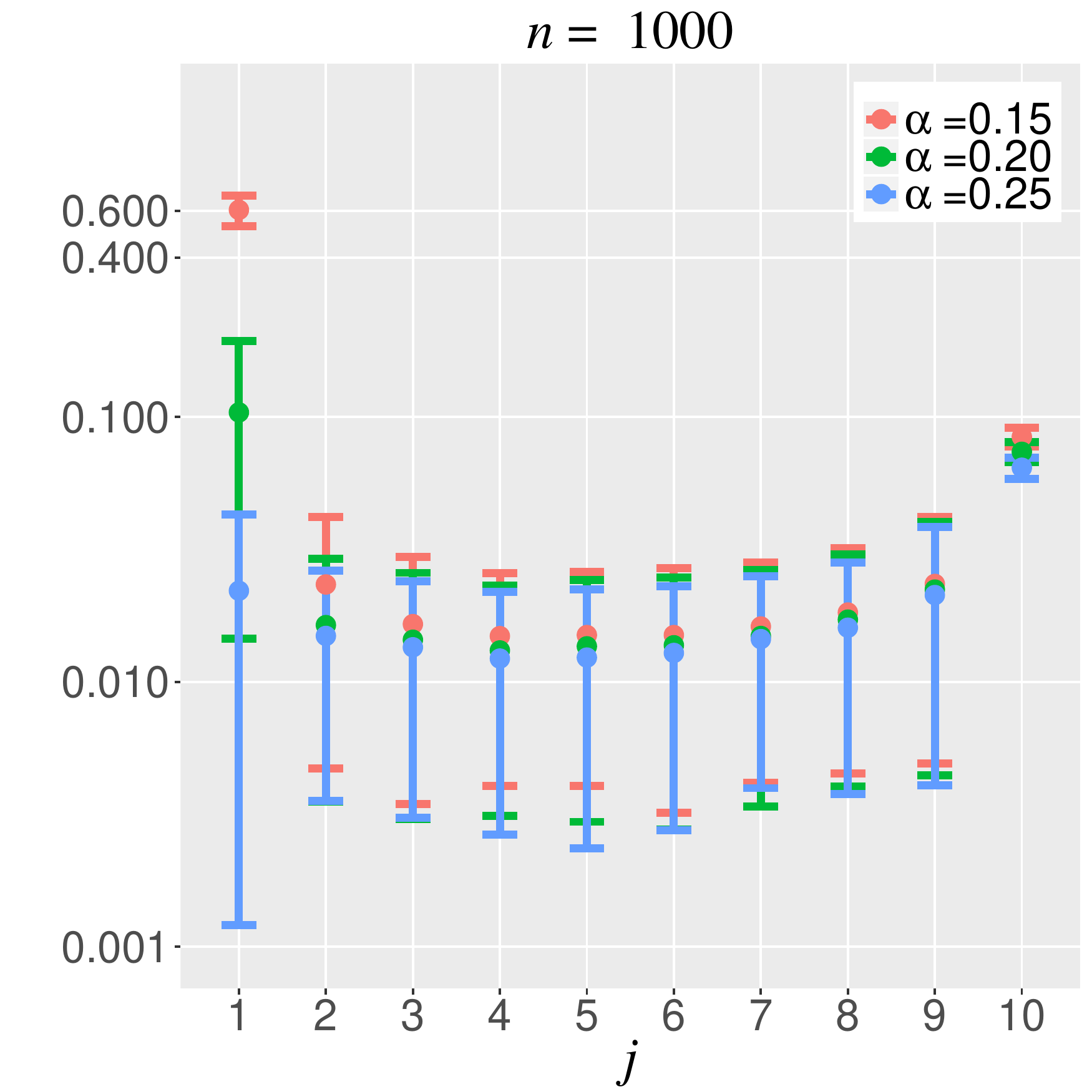}
\caption{Mean values of $|\mu(S) - \hat V|/\mu(S)$ over $B=500$ repeats and error bars representing one standard deviation (a base-10 log scale is used for the $Y$-axis).  In the computation of $\hat V$ we have considered different sizes $m=[nj/10]$ for the subsample $\cX'_n$. The case $j=10$ corresponds to the plug-in estimator.}
\label{fig:eb}
\end{figure}

\paragraph{Bagging}
In the second experiment, we study the same scenarios as in the first experiment, and examine the strategy of performing the sample splitting multiple times.  The new estimator, denoted $\hat V_{\rm bag}$, is obtained by computing ${\hat V}$ for $b=100$ random sample splittings and averaging these.  This is a form of bagging, therefore the name.
\figref{eb_bagg} shows the mean values of $|\mu(S) - \hat V_{\rm bag}|/\mu(S)$ over the $B$ repeats.  ($B = 500$ as before.) As it can be seen, compared to \figref{eb}, this bagging technique reduces the variance of the error.

\begin{figure}[!ht]
\centering
\includegraphics[width=0.32\textwidth]{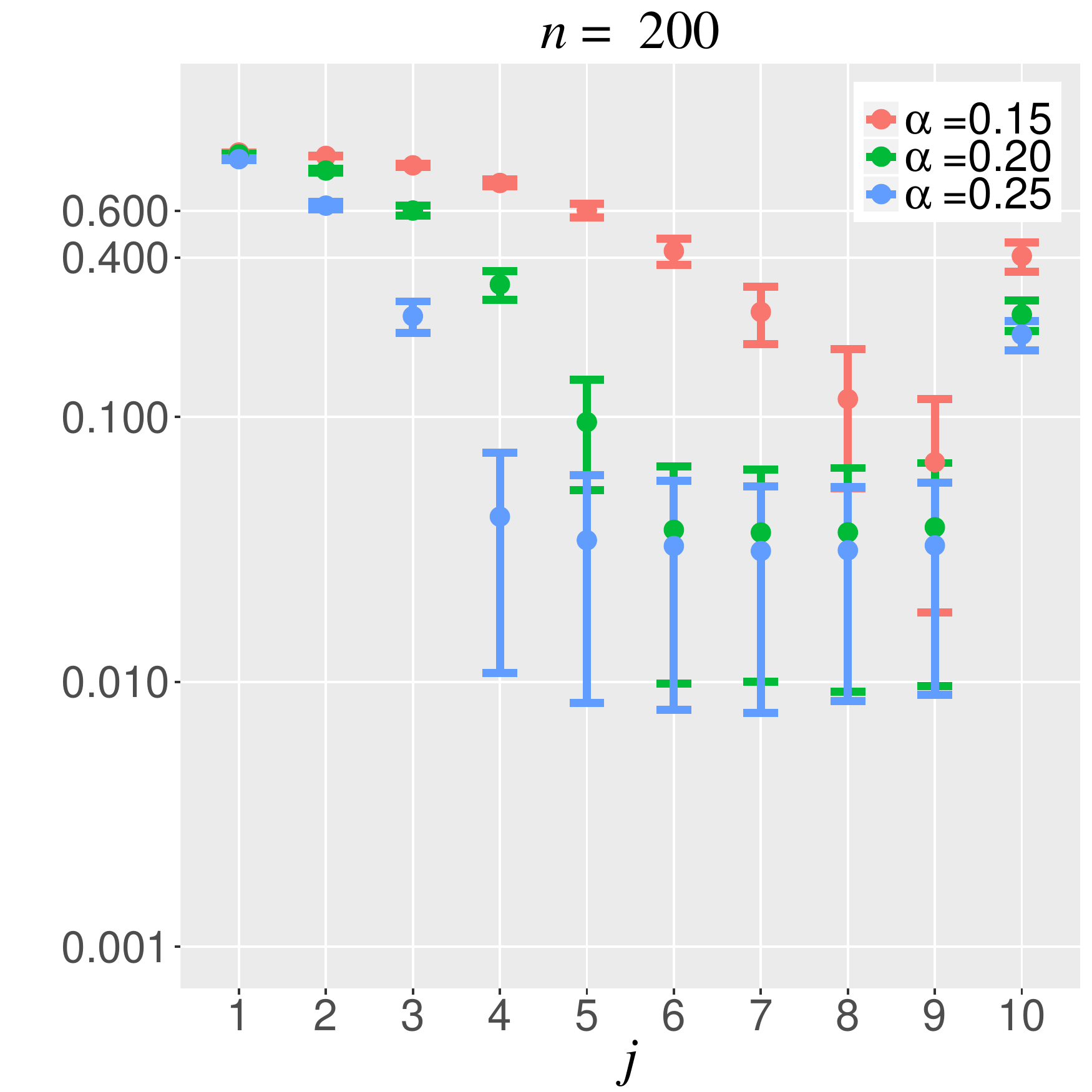}
\includegraphics[width=0.32\textwidth]{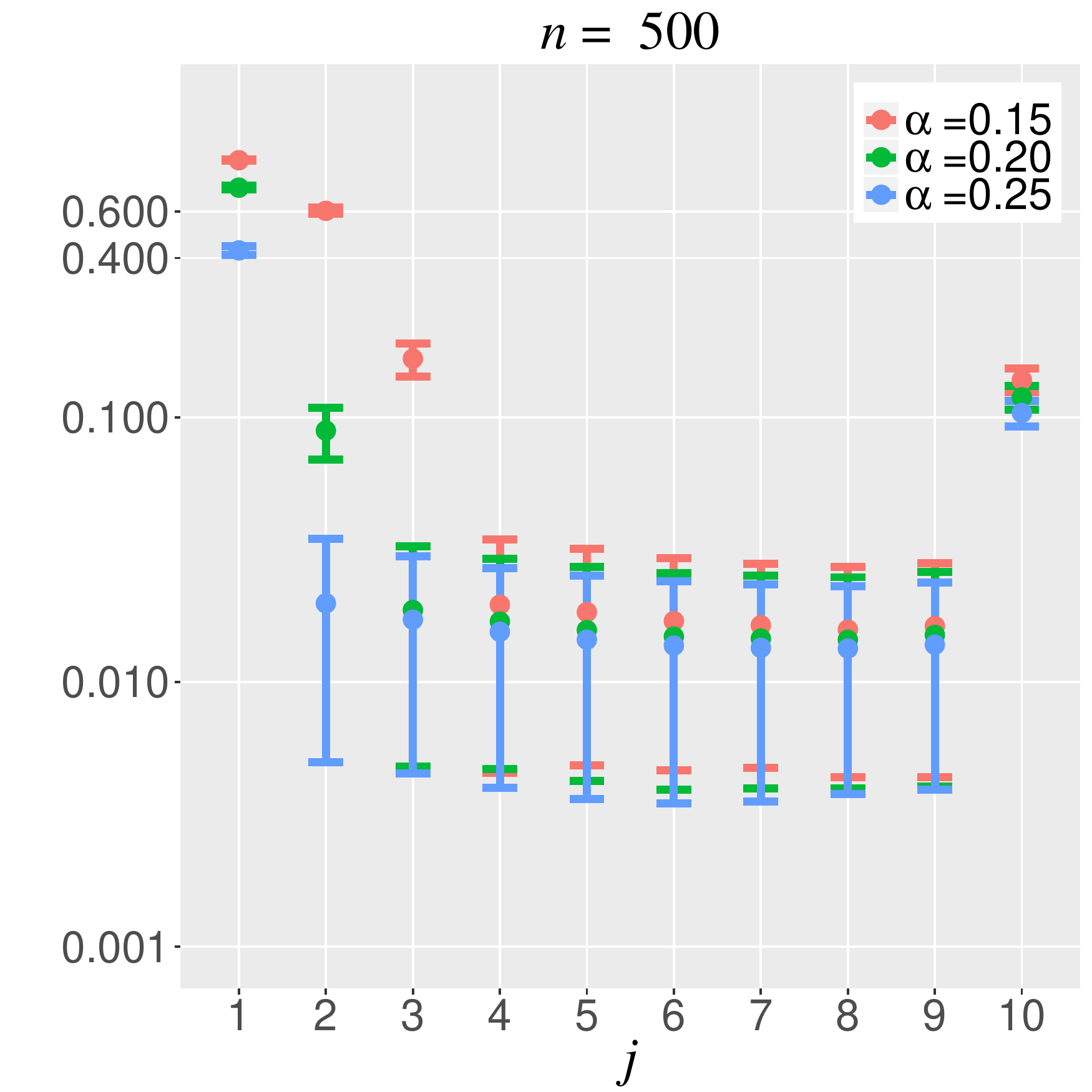}
\includegraphics[width=0.32\textwidth]{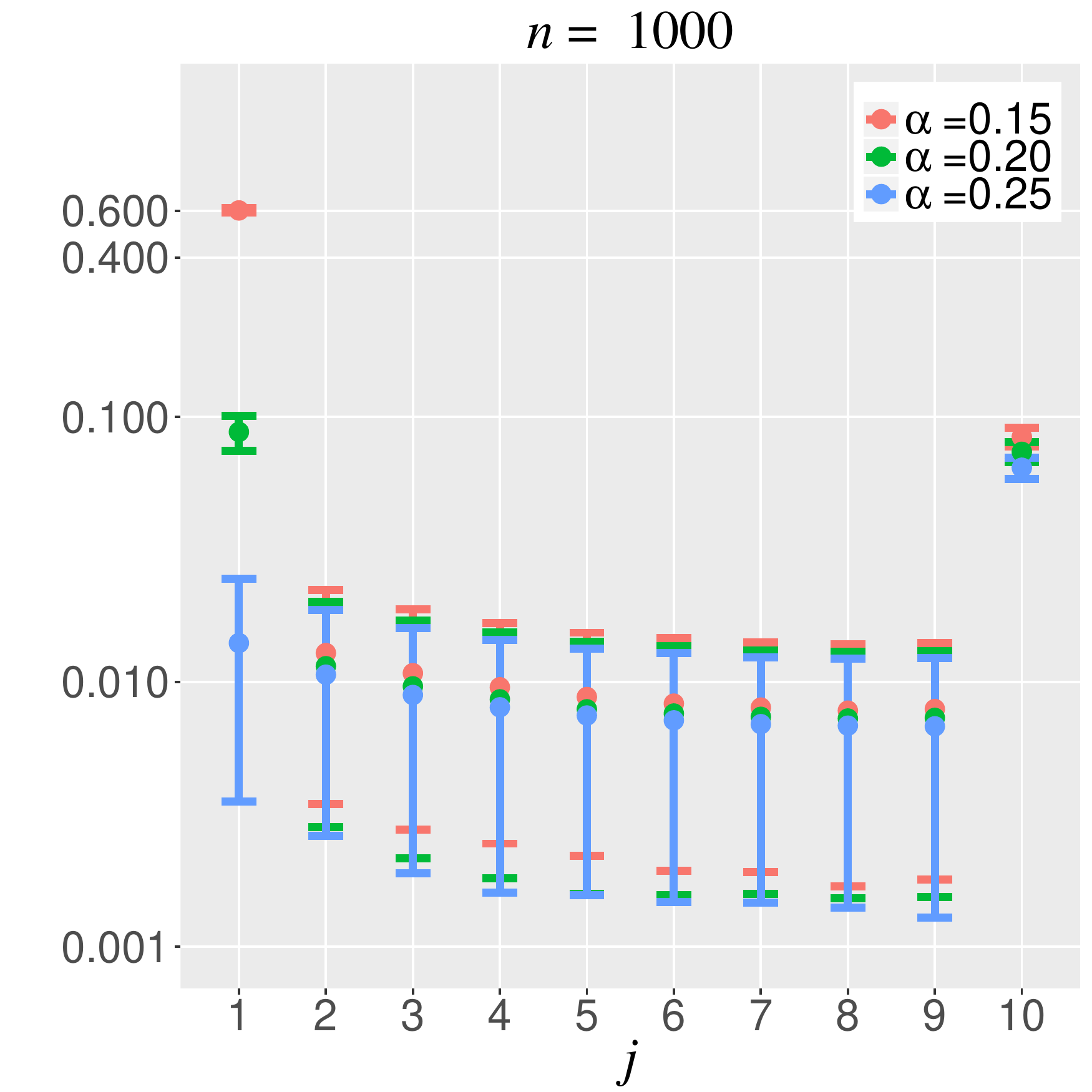}
\caption{Mean values of $|\mu(S) - \hat V_{\rm bag}|/\mu(S)$ over $B=500$ repeats and error bars representing one standard deviation (a base-10 log scale is used for the $Y$-axis).  The bagging was over $b=100$ sample splittings. In the computation of $\hat V$ we have considered different sizes $m=[nj/10]$ for the subsample $\cX'_n$. The case $j=10$ corresponds to $\hat V=\mu(C_\alpha(\cX_n))$.}
\label{fig:eb_bagg}
\end{figure}

\paragraph{Confidence intervals}
As mentioned before, the proposed method lends itself naturally to the computation of confidence intervals for $\mu(S)$ based on the computation of confidence intervals for $\tilde p$.  We use the method of \cite{wilson1927probable} for that purpose.
Results of the estimated coverage probability and estimated mean length  of the confidence intervals for different nominal confidence levels are shown in \tabref{ciW}.  (This is just meant as a proof of concept since there are no other methods we know off to compare this with.)

\begin{table}[!ht]
\centering\small
\begin{tabular}{llrrrrrrrrrr}
  \hline
&Level& 0.50 & 0.55 & 0.60 & 0.65 & 0.70 & 0.75 & 0.80 & 0.85 & 0.90 & 0.95 \\
\hline
$n=200$& Coverage & 0.52 & 0.55 & 0.61 & 0.65 & 0.70 & 0.74 & 0.80 & 0.85 & 0.90 & 0.94 \\
  &Length & 0.31 & 0.35 & 0.39 & 0.43 & 0.48 & 0.53 & 0.59 & 0.67 & 0.76 & 0.91 \\

$n=500$& Coverage & 0.50 & 0.55 & 0.60 & 0.64 & 0.69 & 0.74 & 0.81 & 0.87 & 0.91 & 0.95 \\
  &Length & 0.11 & 0.13 & 0.14 & 0.16 & 0.18 & 0.20 & 0.22 & 0.25 & 0.28 & 0.34 \\

 $n=1000$& Coverage & 0.50 & 0.55 & 0.58 & 0.64 & 0.68 & 0.73 & 0.77 & 0.82 & 0.89 & 0.94 \\
  &Length & 0.06 & 0.07 & 0.08 & 0.08 & 0.09 & 0.10 & 0.12 & 0.13 & 0.15 & 0.18 \\

   \hline

\end{tabular}
\caption{Coverage and length of the confidence interval for $\mu(S)$  based on a confidence interval for $\tilde p$.  We split the sample in half (meaning, we used $m=n/2$) and used $\alpha=0.25$.
Each setting is repeated $B=500$ times and what are shown are the averages of the $B$ repeats.}
\label{tab:ciW}
\end{table}

\paragraph{The convex case}
We replicated the study in \citep{baldin2015unbiased} to compare the performance of our estimator $\hat V$ with  that of the estimators discussed in that paper for the convex case. Data points are simulated for an ellipse, $S$, with center at the origin, major axis of length 10 and minor axis of length 4; see \figref{comp} (left). More specifically, for different values of $n$, we generated $B=500$ samples from a Poisson spatial process over $S$ with constant intensity $\lambda=n/\mu(S)$. The size of each sample, $N$, is Poisson distributed with mean $n$. For the computation of $\hat V$ we randomly split each sample into two subsamples of equal size and compute the $\alpha$-convex hull of the first subsample with $\alpha=10$.  We base our choice of $\alpha$ on the fact that, under the assumption of convexity, the  $\alpha$-convex hull estimator works reasonably well for large values of $\alpha$. \figref{comp} (right) shows, for the considered estimators, the RMSE normalized by the true area based on the $B = 500$ Monte Carlo iterations for each $n$. We use the same notation as in \citep{baldin2015unbiased}.
Our estimator $\hat{V}$ performs slightly better than the rate-optimal estimator based on sample splitting by \cite{gayraud1997estimation}, denoted by $\hat{\upsilon}_G$.  The best performance corresponds to $\hat{\upsilon}_{oracle}$, altough its computation depends on the unknown intensity $\lambda$. The estimators $\hat{\upsilon}_{plugin}$ and $\hat{\upsilon}$, for the case of unknown intensity $\lambda$, also perform well. As already pointed out by \cite{baldin2015unbiased}, all these methods clearly outperform the results of other estimators that are not rate-optimal, such as the Lebesgue measure of the convex hull of the sample, denoted by $|\hat{C}|$, and the so-called naive oracle estimator $N/\lambda$.

\begin{figure}[!ht]
\centering
\includegraphics[width=0.4\textwidth]{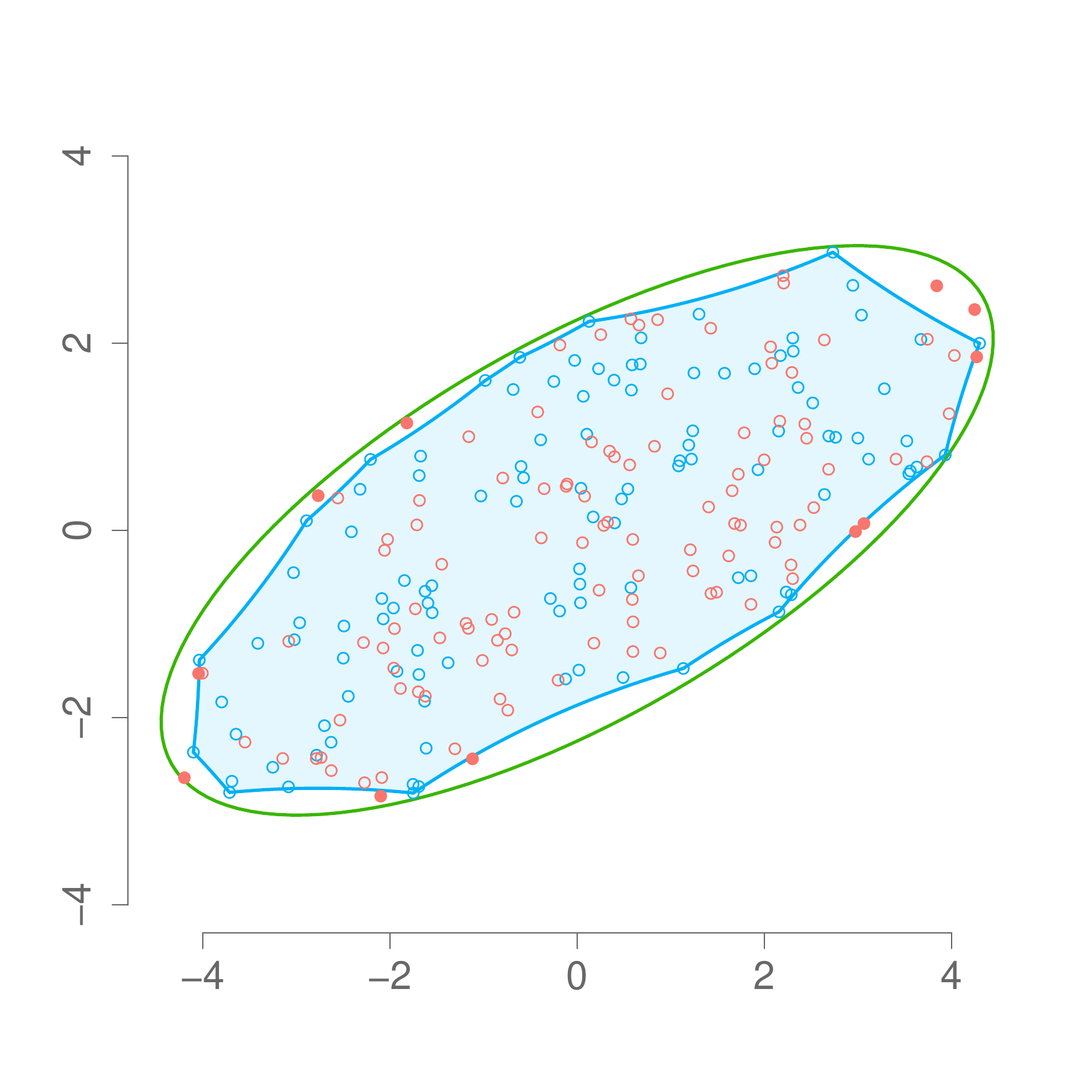}
\includegraphics[width=0.35\textwidth]{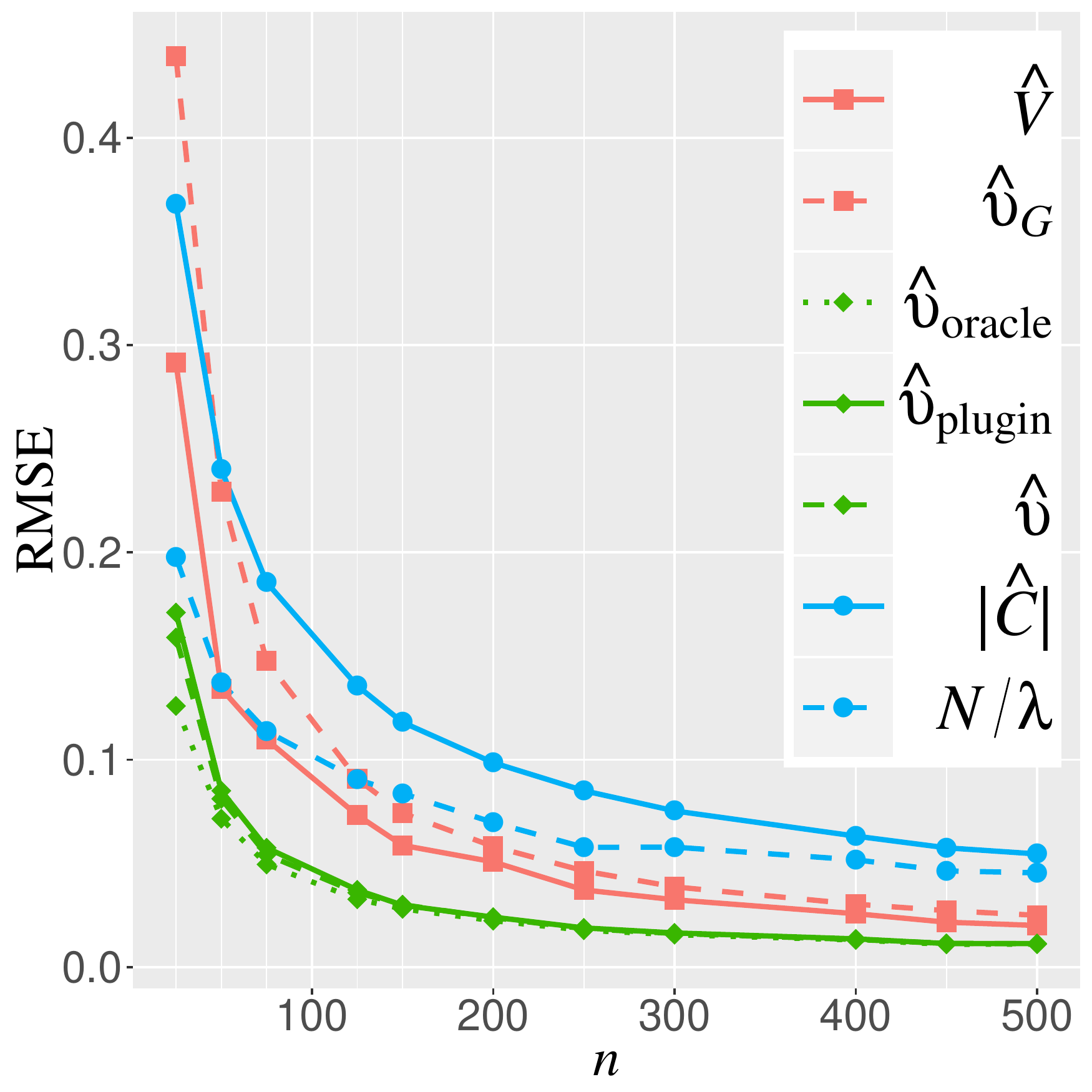}
\caption{Left: A random sample is generated on the ellipse $S$ in green and splitted into $\cX'_n$ (blue points) and $\cX''_n$ (open and solid red points). The solid red points are the observations of $\cX''_n$ that fall outside $C_\alpha(\cX'_n)$, represented in blue for $\alpha=10$. Right: The RMSE (normalized by $\mu(S)$) of different estimators of $\mu(S)$, based on $B = 500$ Monte Carlo iterations in each case.}
\label{fig:comp}
\end{figure}

\section{Discussion}
\label{sec:discussion}

\medskip\noindent {\em Choice of parameters.}
The first estimator we propose, just like the plug-in estimator, depends on the choice of $\alpha > 0$.  We proved 
our result (\thmref{simple}) under the assumption that $\alpha \le r$, but in general $r$ is unknown.  Also, although the theory works for any $\alpha$ thus chosen, in practice, an optimal choice for $\alpha$ may depend on the sample size.  Under uniform sampling, in \cite{rodriguez2014fully}
is proposed a data-driven selector of $\alpha$, $\alpha_n$, such that, with probability one, satisfies $\alpha_n\leq r$ and $\alpha_n\to r$. 

\medskip\noindent {\em Extensions.}
We are confident that our proof arguments proceed with relatively minor modifications when $\partial S$ is piecewise smooth.  However, our working condition \eqref{rolling} --- which is equivalently expressed as a requirement on the reach of $\partial S$ --- is simple and compact, and the resulting analysis already contains all the intricacies.
A more substantial extension is to the setting of an unknown sampling distribution.  This setting is considered in \citep{gayraud1997estimation}, where the sampling density is estimated by a standard kernel procedure, and that estimate is incorporated in the estimator of the volume.  Although the methodology and theory developed in that paper do not apply directly, an adaptation to our setting (namely, to sets satisfying \eqref{rolling}) seems viable.

\medskip\noindent {\em Perimeter.}
A parallel line of research tackles the problem of estimating the perimeter of $S$.
In fact, this problem is also considered by \cite{MR0169139} and \cite{MR1641826}, still in the context of a convex support set in dimension $d=2$.
More recently, in the same setting as ours here, but restricted to dimension $d=2$, \cite{cuefrapat12} study the perimeter of the sample $\alpha$-convex hull, while \cite{arias2015estimating} study the perimeter of the sample $\alpha$-shape.
Parallel to the work of \cite{MR1226450}, and working with a similar model, we find the work of \cite{kim2000estimation}.
We also mention a series of papers that consider the closely related problem of estimating the Minkowski content of the boundary of $S$, still under a similar model, making various regularity assumptions on $S$ \citep{CueFraRod07,patrod09,PatRod08,JimYuk11}.
It would be interesting to obtain similar results for the problem of estimating the perimeter of $S$ under our setting or some of these other settings.

\subsection*{Acknowledgments}
This work was partially supported by the US National Science Foundation (DMS 1513465) and by the Spanish Ministry of Economy and Competitiveness and ERDF funds (MTM2013-41383-P).

\appendix
\section{Appendix: additional proofs}
\subsection{Rolling a ball inside a convex set}
\begin{Def}
For a set $S$ and $\alpha > 0$, let $G_\alpha(S)$ denote the set of $x \in S$ with the property that there is an open ball $B$ of radius $\alpha$ such that $x \in B \subset S$.
\end{Def}

\begin{lem} \label{lem:convex}
If $S \subset \bbR^d$ is convex, then for any $\alpha > 0$, $G_\alpha(S)$ is either empty or convex.
\end{lem}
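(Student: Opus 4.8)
The plan is to show directly that $G_\alpha(S)$ is closed under taking segments, which (together with nonemptiness) gives convexity. Fix two points $x_0, x_1 \in G_\alpha(S)$. By definition there are open balls $B_0 = B(c_0, \alpha)$ and $B_1 = B(c_1, \alpha)$ of radius $\alpha$ with $x_i \in B_i \subset S$ for $i = 0,1$. For $t \in [0,1]$ write $x_t = (1-t)x_0 + t x_1$ and $c_t = (1-t)c_0 + t c_1$, and I would propose the open ball $B_t := B(c_t, \alpha)$ as the witness establishing $x_t \in G_\alpha(S)$.

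The technical crux is an elementary Minkowski-sum identity for balls,
\[
(1-t)\,B(c_0,\alpha) + t\,B(c_1,\alpha) = B(c_t,\alpha),
\]
which follows from $B(a,\rho) + B(b,\sigma) = B(a+b,\rho+\sigma)$ and the fact that positive scaling sends $B(c,\alpha)$ to $B(\lambda c, \lambda \alpha)$ (here the scaled radii $(1-t)\alpha$ and $t\alpha$ add back up to $\alpha$). Granting this, the two inclusions I need drop out at once. First, since $x_0 \in B_0$ and $x_1 \in B_1$, we get $x_t = (1-t)x_0 + t x_1 \in (1-t)B_0 + t B_1 = B_t$. Second, since $S$ is convex and $B_0, B_1 \subset S$, closure of $S$ under convex combinations gives $(1-t)B_0 + t B_1 \subset (1-t)S + tS \subset S$, hence $B_t \subset S$. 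Combining, $x_t \in B_t \subset S$ with $B_t$ an open ball of radius $\alpha$, so $x_t \in G_\alpha(S)$; as $x_0,x_1$ were arbitrary, $G_\alpha(S)$ is convex.

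There is no serious obstacle here; the only points that warrant a moment's care are verifying the Minkowski identity as an exact equality (in particular that $B_t$ is genuinely open and has radius \emph{exactly} $\alpha$, not merely that it is contained in some ball of that radius) and disposing of the degenerate endpoints $t \in \{0,1\}$, where the conclusion is immediate because $x_0, x_1 \in G_\alpha(S)$ by hypothesis. I would record the Minkowski identity as a one-line sublemma and then assemble the two inclusions above into the main argument.
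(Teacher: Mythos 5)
Your proof is correct and takes essentially the same approach as the paper: the paper forms the convex hull of the two witness balls $B_x \cup B_y$, notes it lies in $S$ by convexity, and describes it as a union of radius-$\alpha$ balls centered along the segment joining the centers --- which is exactly the content of your Minkowski identity $(1-t)B(c_0,\alpha)+t\,B(c_1,\alpha)=B(c_t,\alpha)$, applied pointwise to each $x_t$. If anything, your algebraic formulation is slightly tidier, since it keeps the ball centers $c_0,c_1$ explicit where the paper's notation conflates them with the points $x,y$ themselves.
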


\begin{proof}
Write $G$ for $G_\alpha(S)$.  By definition (and the axiom of choice), for any $x \in S$ we may choose an open ball of radius $\alpha$, denoted $B_x$, such that $x \in B_x \subset S$.
Suppose $S$ contains a ball of radius $\alpha$, for otherwise $G$ is empty and there is nothing else to prove.  Take $x,y \in G$ and let $C$ denote the convex hull of $B_x \cup B_y$.  On the one hand, $C \subset S$, because $B_x \cup B_y \subset S$ and $S$ is convex.  On the other hand, for all $z \in C$, there is a ball $B$ of radius $\alpha$ such that $z \in B \subset C$.  This is obvious from the fact that $C$ is the union of the cylinder with center rod $[xy]$ and radius $\alpha$ and the two half balls defined by $B_x$ and $B_y$ on each end, which can be expressed as
\beq
C = \bigcup_{t \in [x,y]} B(t, \alpha).
\eeq
Hence, $C \subset G$, and in particular, $[xy] \subset G$ since $[xy] \subset C$.  This being true for all $x,y \in G$, we conclude that $G$ is indeed convex.
\end{proof}

\subsection{Proof of \thmref{cr_d}}
The arguments are only sketched and more details can be found in \citep{bea-tesis}.

Before proving \thmref{cr_d}, we need to introduce some notation.
The distance between $x \in \bbR^d$ and $S \subset \bbR^d$ is defined as $\dist(x, S) = \inf_{s \in S} \|x -s\|$.
Given a unit vector $u$ and an angle $\theta\in [0,\pi/2]$, consider the infinite cone with apex $x$, axis $u$ and aperture $2\theta$ defined by
\[C_{u}^\theta(x)=\{z\in\mathbb{R}^d, z\neq x:\left\langle {z-x},u\right\rangle\geq{\|z-x\|}\cos\theta\}.\]
Note that the notation is slightly different from the one we used in \secref{lower}. For $h>0$, consider the \textit{finite} cone obtained by intersecting an infinite cone with a ball of radius $h$ centered at its apex,
$C_{u,h}^\theta(x)=C_{u}^\theta(x)\cap B(x,h)$.
For $x\in\mathbb{R}^d$ and $r>0$, let $\cE_{x,r}=\{{B}(y,r): y\in {B}(x,r)\}$\glossary{$\cE_{x,r}$&$\{{B}(y,r): y\in {B}(x,r)\}$}.

\begin{Def} \label{def:inev2}
The family of sets $\cU$ is said to be unavoidable for another family of sets $\cE$ if, for all $E\in\cE$, there exists $U\in\cU$ such that $U \subset E$.
\end{Def}

\begin{lem}\label{lem:inev.Rd1}
Under the conditions of \thmref{cr_d} (and recalling the definition of $\delta$ there), for any $x\in S$ such that $\dist(x,\partial S)> \alpha_n/2$, there exists a finite family $\cU_{x,\alpha_n}$ with at most $m_1$ elements, unavoidable for $\cE_{x,\alpha_n}$ and that satisfies
\[P_X(U)\geq L_1\alpha_n^d, \quad \forall U\in \cU_{x,\alpha_n},\]
where $m_1 \ge 1$ depends only on $d$ and $L_1>0$ only on $(d,\delta)$.
\end{lem}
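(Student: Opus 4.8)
The plan is to build $\cU_{x,\alpha_n}$ out of finitely many small balls, one for each direction in a fixed angular net of the unit sphere $S^{d-1}$. Fix a small angle $\theta>0$ (to be pinned down, as a universal constant) and a maximal $\theta$-net $u_1,\dots,u_{m_1}$ of $S^{d-1}$, so that every unit vector lies within angle $\theta$ of some $u_i$; a standard covering-number bound gives $m_1 \le (c/\theta)^{d-1}$, which depends on $d$ only once $\theta$ is fixed. For constants $a,b>0$ with $a+b\le 1/2$ (also to be chosen, universally) set
\[
U_i := B\big(x + a\alpha_n u_i,\, b\alpha_n\big), \qquad i = 1,\dots,m_1,
\]
and take $\cU_{x,\alpha_n} = \{U_1,\dots,U_{m_1}\}$. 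Since $a+b\le 1/2$, each $U_i\subset B(x,\alpha_n/2)$; and because $x\in S$ while $\dist(x,\partial S)>\alpha_n/2$ (any segment leaving $S$ from $x$ must first meet $\partial S$), we get $B(x,\alpha_n/2)\subset S$, hence $U_i\subset S$.

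The geometric core is to show $\cU_{x,\alpha_n}$ is unavoidable for $\cE_{x,\alpha_n}$. Take $B(y,\alpha_n)\in\cE_{x,\alpha_n}$ and set $s:=\|y-x\|\le\alpha_n$. If $y=x$, then every $U_i\subset B(x,\alpha_n/2)\subset B(y,\alpha_n)$; otherwise write $y=x+sv$ with $v$ a unit vector and choose $u_i$ with $\langle u_i,v\rangle\ge\cos\theta$. I would then check $U_i\subset B(y,\alpha_n)$, i.e. $\|a\alpha_n u_i - sv\| + b\alpha_n < \alpha_n$. With $\cos\phi=\langle u_i,v\rangle\ge\cos\theta$, the squared center distance $\|a\alpha_n u_i - sv\|^2 = \alpha_n^2\big(a^2 - 2as\cos\phi/\alpha_n + s^2/\alpha_n^2\big)$ is a convex quadratic in $s$, so its maximum over $[0,\alpha_n]$ is attained at an endpoint. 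At $s=0$ the requirement reduces to $a+b<1$, and at $s=\alpha_n$ to $\sqrt{a^2-2a\cos\phi+1}+b<1$, whose worst case is $\cos\phi=\cos\theta$. Both hold for suitable universal constants, e.g. $a=0.3$, $b=0.15$, and any $\theta$ with $\cos\theta>0.613$ (so $\theta=50^\circ$ suffices); these are the values that make $U_i$ sit inside $S$ and inside every admissible $B(y,\alpha_n)$ at once.

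Finally, since each $U_i\subset S$, the hypothesis $P_X(C)\ge\delta\,\mu(C\cap S)$ yields $P_X(U_i)\ge\delta\,\mu(U_i\cap S)=\delta\,\mu(U_i)=\delta\,\zeta_d (b\alpha_n)^d = L_1\alpha_n^d$, where $L_1:=\delta\,\zeta_d\, b^d$ depends only on $(d,\delta)$, while $m_1$ depends only on $d$. This is exactly the asserted family. I expect the main obstacle to be the \emph{uniform} containment $U_i\subset B(y,\alpha_n)$: a single fixed ball $U_i$ must be swallowed simultaneously by every $B(y,\alpha_n)$ whose axis $v$ points within $\theta$ of $u_i$ and whose center ranges over $0\le s\le\alpha_n$. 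The endpoint analysis above is what makes this possible, and the delicate part is simply arranging the constants $a,b,\theta$ so that the two endpoint constraints and the containment $U_i\subset B(x,\alpha_n/2)$ are jointly satisfiable; this is an elementary optimization, but it is the crux of the argument.
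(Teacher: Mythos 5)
Your proof is correct and follows essentially the same strategy as the paper's: a fixed finite net of directions whose cardinality depends only on $d$, one set per direction contained in $B(x,\alpha_n/2)\subset S$ with volume of order $\alpha_n^d$, and unavoidability verified by a worst-angle/endpoint computation. The only difference is cosmetic---you use off-center balls $B(x+a\alpha_n u_i,\,b\alpha_n)$ where the paper uses truncated cones $C_{u,\alpha_n/2}^{\theta}(x)$ with $\theta=\pi/6$---and your version has the minor virtue of working uniformly in every dimension, whereas the paper treats $d=1$ and $d=2$ as separate cases.
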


\begin{proof}
The case $d=1$ is handled separately. For $x\in \mathbb{R}$ under the stated conditions let us consider the unavoidable family
$\cU_{x,\alpha_n}=\{[x-\alpha_n/2,x],[x,x+\alpha_n/2]\}$. The result holds for $L_1=\delta/2$.  For the case $d=2$, see Proposition~1 of \citep{patrod13}. Let us then assume that $d\geq 3$ and fix $\theta=\pi/6$.
There exists a finite family $\W$, with $m_1$ unit vectors ($m_1$ dependent only on $d$), such that we can cover $B(x,\alpha_n)$ by the cones $C_{u,{\alpha_n}}^\theta(x)$, with $u\in\W$.
The family
$\cU_{x,\alpha_n}=\{C_{u,\alpha_n/2}^\theta(x),\ u\in\W\}$
is unavoidable for $\cE_{x,\alpha_n}$.
For each $u\in\W$,
\beq\label{buf66}
P_X\left(C_{u,\alpha_n/2}^\theta(x)\right)\geq\delta\mu\left(C_{u,\alpha_n/2}^\theta(x)\cap S\right)=\delta\mu\left(C_{u,\alpha_n/2}^\theta(x)\right)
\eeq
where the equality comes from the fact that $\dist(x,\partial S)> \alpha_n/2$.
Finally,
\beq\label{buf666}
\mu\left(C_{u,\alpha_n/2}^\theta(x)\right)\geq\mu(B(x,\alpha_n/2))/m_1=\omega_d\left(\alpha_n/2\right)^d/m_1,
\eeq
where $\omega_d$ denotes the Lebesgue measure of the unit ball in $\mathbb{R}^d$.  The proof is complete with $L_1=\delta\omega_d/(2^dm_1)$.
\end{proof}

\begin{lem}\label{lem:inev.Rd2}
Under the conditions of \thmref{cr_d} (and recalling the definition of $r$ and $\delta$ there), for any $x\in S$ such that $\dist(x,\partial S)\leq \alpha_n/2$, there exists a finite family $\cU_{x,\alpha_n}$ with at most $m_2$ elements, unavoidable for $\cE_{x,\alpha_n}$ and that satisfies
\[P_X(U)\geq L_2\alpha_n^{(d-1)/2}\dist(x,\partial S)^{(d+1)/2}, \quad \forall U\in \cU_{x,\alpha_n},\]
where $m_2 \ge 1$ depends only on $d$ and $L_2>0$ only on $(d, r, \delta)$.
\end{lem}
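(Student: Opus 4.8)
The plan is to follow the directional-covering strategy of \lemref{inev.Rd1}, but to replace, for the directions along which a ball $B(y,\alpha_n)$ pokes out of $S$, the finite cones (whose intersection with $S$ is too small near the boundary) by flat, cap-shaped sets whose volume matches the target rate. First I would write $t := \dist(x,\partial S) \le \alpha_n/2$, let $p \in \partial S$ be a nearest boundary point so that $x = p - t\nu$ with $\nu$ the outer unit normal at $p$, and work in local coordinates with $p$ at the origin and $\nu = e_d$. Since $S$ and $S^\comp$ both satisfy the $r$-rolling condition and $\alpha_n \le r$, the boundary is trapped between the tangent balls $B(p \mp r\nu, r)$; in particular $\partial S$ departs from the tangent hyperplane $\{z_d = 0\}$ by at most a curvature term of order $\rho^2/r$ over a transverse neighborhood of radius $\rho$, a fact I would use to pass between $S$ and its half-space approximation $\{z_d \le 0\}$.

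Next I would classify the balls of $\cE_{x,\alpha_n}$ --- equivalently, the open balls $B(y,\alpha_n)$ of radius $\alpha_n$ containing $x$ --- by the direction $u=(y-x)/\|y-x\|$, covering the unit sphere by finitely many cones as in \lemref{inev.Rd1}. For directions pointing sufficiently into $S$ (within a fixed angle of $-\nu$), the finite cone $C^\theta_{u,\alpha_n/2}(x)$ is again unavoidable for the corresponding balls, and because $x$ lies in the inner rolling ball $B(p-r\nu,r)\subset S$ with $r\ge\alpha_n$, such a cone is contained in $S$; its volume is of order $\alpha_n^d$, which dominates $\alpha_n^{(d-1)/2}t^{(d+1)/2}$ since $t\le\alpha_n/2$. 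These directions are handled exactly as before.

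The substance lies in the complementary directions, along which $B(y,\alpha_n)$ protrudes from $S$. I would first identify the extremal configuration: among all balls of $\cE_{x,\alpha_n}$ the one with the smallest intersection with $S$ is the straight-outward ball $B(x+s\nu,\alpha_n)$ as $s\uparrow\alpha_n$, whose intersection with $\{z_d\le 0\}$ is a spherical cap of height $\alpha_n-s+t\ge t$, hence of volume of order $\alpha_n^{(d-1)/2}t^{(d+1)/2}$ and of transverse radius of order $\sqrt{\alpha_n t}$; this both explains the exponent in the target rate and shows it is sharp. For each non-inward direction $u$ of the finite cover I would then attach a fixed, slightly shrunken cap-shaped region $U$, placed just inside $S$ and positioned toward the common part of the balls whose center lies in that directional cone, of $\nu$-thickness a small multiple of $t$ and transverse radius a small multiple of $\sqrt{\alpha_n t}$. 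Two points require care and constitute the main obstacle. First, I must check that one fixed region sits inside the whole continuum of admissible balls of a given directional cone simultaneously, the thinnest (largest-$s$) ball being the binding constraint; this forces the shrink factors and the narrowness of the cones, the latter costing only finitely many extra directions, their number depending on $d$ alone. Second, the region must lie in $S$ and not merely in the half-space: since the reach is at least $r\ge\alpha_n$, the boundary departs from the tangent hyperplane by at most order $\alpha_n t/r\le t$ over the transverse scale $\sqrt{\alpha_n t}$, so shifting the region inward by this amount keeps it inside $S$ at the cost of a constant depending on $r$.

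With the family in hand, each member satisfies $U\subset S$ and $\mu(U)\gtrsim\alpha_n^{(d-1)/2}t^{(d+1)/2}$, so the sampling hypothesis $P_X(U)\ge\delta\,\mu(U\cap S)=\delta\,\mu(U)$ delivers the asserted bound, with $L_2$ absorbing the dimensional cap constant, the reach $r$ (from the curvature shift) and $\delta$, and with $m_2$ the number of directions in the cover, depending only on $d$. As in \lemref{inev.Rd1}, I would treat $d=1,2$ directly --- in one dimension the region is simply an interval of length of order $t$ inside $S$ --- and carry out the general construction for $d\ge 3$.
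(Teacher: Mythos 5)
Your overall architecture does match the paper's: split the balls of $\cE_{x,\alpha_n}$ into those centered on the inward side and the rest, serve the inward ones with solid cones, and serve the outward ones with flat sets of thickness $\asymp t$ and transverse radius $\asymp\sqrt{\alpha_n t}$, bounding $P_X(U)\geq\delta\mu(U\cap S)$ via the inner rolling ball. (Your inward step is correct and in fact slightly simpler than the paper's, which intersects longer cones with the half-space $H_0$ and a ball of radius $\sqrt{\rho(2r-\rho)}$.) The gap is in the unavoidability argument for the outward directions. You cover the sphere of center-directions by cones of a \emph{fixed} aperture $\theta$ (their number depending on $d$ alone) and place one fixed cap in the common part of all balls whose center-direction lies in a given cone. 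For the cone containing the outward normal $\nu$ this cannot be done. Take $S$ locally equal to the inner rolling ball $B(p-r\nu,r)$ (this is admissible, e.g.\ $S$ a ball of radius exactly $r$), use your coordinates ($p$ at the origin, $\nu=e_d$, $x=-te_d$), and consider the extreme balls of that cone, $B(x+s\hat u_{\pm},\alpha_n)$ with $\hat u_{\pm}=\cos\theta\,e_d\pm\sin\theta\,v$, $v$ horizontal, $s\uparrow\alpha_n$. Writing $z=x+\sigma w+\kappa e_d$ with $w$ a horizontal unit vector, membership in all such balls forces $\kappa\cos\theta-\sigma\sin\theta\,|\langle w,v\rangle|\geq(\sigma^2+\kappa^2)/(2\alpha_n)$, while $z\in S$ forces $\kappa<t$. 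Hence the common part intersected with $S$ lies in $\{0<\kappa<t,\ \sigma|\langle w,v\rangle|<t/\sin\theta,\ \sigma<\sqrt{2\alpha_n t}\}$, whose volume is $O\big(t\cdot(t/\sin\theta)\cdot(\alpha_n t)^{(d-2)/2}\big)$ --- smaller than the required $\alpha_n^{(d-1)/2}t^{(d+1)/2}$ by a factor of order $\sqrt{t/\alpha_n}/\sin\theta$. So once $t\ll\alpha_n\sin^2\theta$, no cap placed in that cone's common part can have the required measure, no matter where it sits; and repairing this by narrowing the cones requires aperture $\lesssim\sqrt{t/\alpha_n}$, hence $\asymp(\alpha_n/t)^{(d-1)/2}$ cones near the outward pole, so that $m_2$ would depend on $t/\alpha_n$ and not on $d$ alone, contradicting the statement.

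The paper's proof resolves exactly this point by indexing the outward sets not by directions in the full sphere but by \emph{horizontal} directions $u$ (unit vectors of $\mathbb{R}^{d-1}$): the unavoidable sets are the wedges $Q_u^\theta\cap\C_\ddag$, angular sectors of a single lens $\C_\ddag$, namely the cap of height $h_1\asymp\rho$ of the limiting straight-out ball (the ball of radius $\alpha_n$ tangent at $x$ on the outward side), with $h_1$ chosen so that $\C_\ddag$ fits inside the inner rolling ball $B(P_\Gamma x-r\eta,r)$. Every straight-out ball contains all of $\C_\ddag$, and a ball whose center tilts toward the horizontal direction $v$ contains the wedge on the $v$ side --- the tilt only helps, uniformly in its magnitude. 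Each ball of $\F_{\alpha_n}$ is therefore assigned the wedge matching the horizontal projection of its center, and a dimension-only number of wedges suffices. This reassignment --- by horizontal projection of the center rather than by proximity in the direction sphere --- is the idea missing from your proposal; without it, the construction fails precisely in the regime $\dist(x,\partial S)\ll\alpha_n$ that carries the main contribution in the proof of \thmref{cr_d}.
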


\begin{proof}
Let $x\in S$ such that $\dist(x,\partial S)\leq \alpha_n/2$. We denote $\rho=\dist(x,\partial S)$. For $d=1$ consider the unavoidable family
$\cU_{x,\alpha_n}=\{[x-\rho,x],[x,x+\rho]\}$ and the result holds for $L_2=\delta$.
For the case $d=2$, see Proposition~2 of \citep{patrod13}.
Let us assume that $d\geq 3$. Using the rolling condition, let $P_{\Gamma}x$ be the metric projection of $x$ onto $\Gamma:=\partial S$ and $\eta$ the outward pointing unit normal vector at $P_{\Gamma}x$. By the fact that $S^\comp$ satisfies the $r$-rolling ball condition, we have that $B(P_{\Gamma}x-r\eta,r)\subset S$.
Thus, given $\cU_{x,\alpha_n}$ an unavoidable family of sets for $\cE_{x,\alpha_n}$, we have that
\beq
P_X(U)\geq \delta \mu(U\cap S)\geq \delta\mu(U\cap B(P_{\Gamma}x-r\eta,r))
\eeq
for all $U\in \cU_{x,\alpha_n}$. We can assume, without loss of generality, that $x$ is the origin and $\eta=-e_d$, where $e_d$ denotes the $d$-th canonical basis vector.
Then, the problem reduces to defining a suitable family of sets $\cU_{0,\alpha_n}$ unavoidable for  $\cE_{0,\alpha_n}$ and giving a lower bound for $\mu(U\cap B((r-\rho)e_d,r))$ independent of $U\in \cU_{0,\alpha_n}$. 
We partition $B(0,\alpha_n)$ into the following two sets

\beq
\G_{\alpha_n}=\left\{y\in B(0,\alpha_n):\ \left\langle y, e_d\right\rangle\geq -\left\|y\right\|/2\right\}, \quad
\F_{\alpha_n}=\left\{y\in B(0,\alpha_n):\ \left\langle y, e_d\right\rangle< -\left\|y\right\|/2\right\}.
\eeq
In order to simplify the notation, we write $C_{u}^\theta$ and $C_{u,h}^\theta$ to refer to $C_{u}^\theta(x)$ and $C_{u,h}^\theta(x)$ for $x=0$.

First, let us consider $\G_{\alpha_n}$. Fix $\theta=\pi/6$ and $\gamma\in (0,\pi/6)$, say $\gamma = \pi/7$. There exists a finite family $\W^\G$, with $m^\G$ unit vectors (depending only on $d$), with the property that for all $y\in\G_{\alpha_n}$ there exists  $u\in\W^\G$ such that $C_{u,\alpha_n}^\theta\subset B(y,\alpha_n)$ and $\left\langle u,e_d\right\rangle\geq -\sin\gamma$.
Let $H_0=\{x\in \mathbb{R}^d : \<x, e_d\>  \geq 0\}$.  There is an absolute angle $\tilde{\theta} > 0$ with the property that, for each unit vector $u$ with $\left\langle u,e_d\right\rangle\geq-\sin\gamma$ there exists a unit vector $\tilde{u}$ such that $C_{\tilde{u}}^{\tilde{\theta}}\subset C_{u}^\theta\cap H_0$. Now, for $\psi=\sqrt{\rho(2r-\rho)}$ and for each $u\in\W^\G$,
\beq
C_{u,\alpha_n}^\theta\cap B((r-\rho)e_d,r)\supset C_{u,\alpha_n}^\theta\cap H_0\cap B(0,\psi)\supset C_{\tilde{u},\alpha_n}^{\tilde{\theta}}\cap B(0,\psi)=C_{\tilde{u},\tau_n}^{\tilde{\theta}},
\eeq
where $\tau_n:=\min(\psi,\alpha_n)$. The ball $B(0,\tau_n)$ can be covered by a finite number $m$ (depending only on $d$) of cones $C_{\tilde{u},\tau_n}^{\tilde{\theta}}$, with varying $\tilde u$.
Using that $\alpha_n\leq r$ and $\rho\leq \alpha_n/2$, we have that
\beq
\mu\left(B(0,\tau_n)\right)\geq w_d\alpha_n^{(d-1)/2}\rho^{(d+1)/2}
\eeq
and, therefore,
\beq
\mu(C_{u,\alpha_n}^\theta\cap B((r-\rho)e_d,r))\geq L^\G\alpha_n^{(d-1)/2}\rho^{(d+1)/2},
\eeq
where $L^\G:=w_d/m>0$ only depends on $d$.

Now, let us consider $\F_{\alpha_n}$. First, we define the set
\beq
\C_\ddag=\{x\in\mathbb{R}^d:\ -h_1\leq \left\langle x,e_d\right\rangle\leq 0\}\cap B(-\alpha_n e_d,\alpha_n),
\eeq
where $h_1:={\rho(2r-\rho)}/(2(r+\alpha_n-\rho))$; see \figref{QuCh} (left). It can be proved that $\C_\ddag\subset  B((r-\rho)e_d,r)$ and
\beq\label{cmeas}
\mu(\C_\ddag)\geq\frac{\omega_{d-1}}{(d+1)2^{(d-1)/2}}\alpha_n^{(d-1)/2}\rho^{(d+1)/2}.
\eeq
For $\theta\in [0,\pi/2]$ and a unit vector $u\in\mathbb{R}^{d-1}$, let
$Q_u^\theta=\left\{x\in\mathbb{R}^d:\ x_{-d} \in C_{u}^\theta\right\}$, where $x_{-d}$ denotes the  vector $x$ without the last component; see \figref{QuCh} (right).
\begin{figure}[!ht]
\centering
\includegraphics[width=0.3\textwidth]{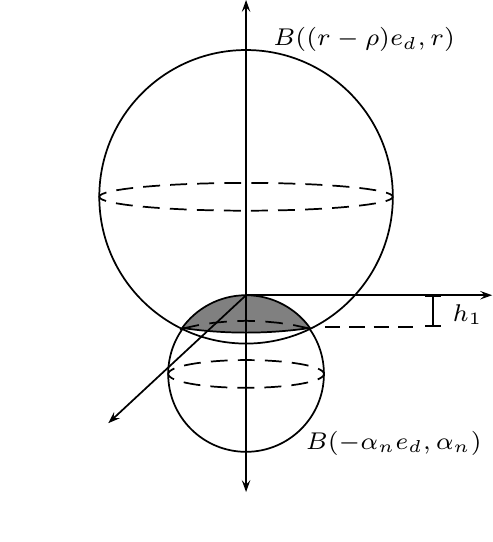}
\includegraphics[width=0.3\textwidth]{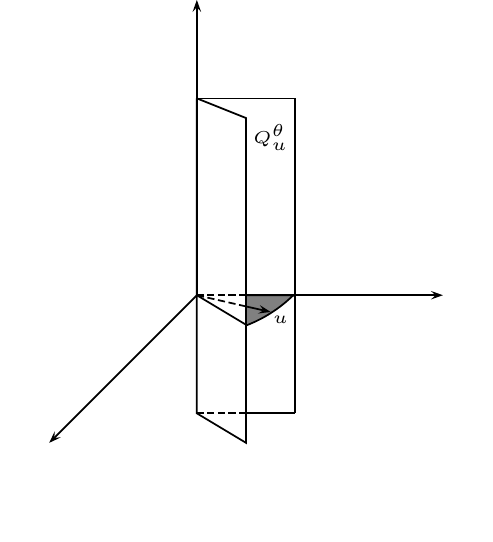}
\caption{Left, in gray  set $\C_\ddag$ in $\mathbb{R}^3$. Right, example of set $Q_u^\theta$ in  $\mathbb{R}^3$.}
\label{fig:QuCh}
\end{figure}
We will prove that we can define an unavoidable family with sets of the form $Q_u^\theta\cap\C_\ddag$, all with the same Lebesgue measure; see \figref{QuCh2}.
\begin{figure}[!ht]
\centering
\includegraphics[width=0.25\textwidth]{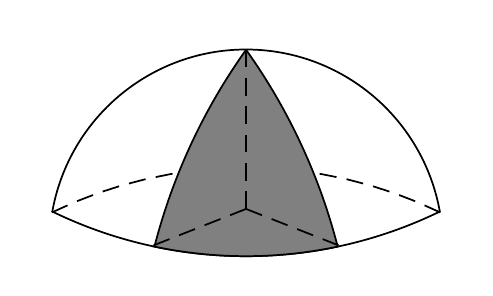}
\caption{Example of set $Q_u^\theta\cap\C_\ddag$ in  $\mathbb{R}^3$.}
\label{fig:QuCh2}
\end{figure}
Fix $\theta=\pi/6$. There exists a finite family $\W^\F$, with $m^\F$ unit vectors in $\mathbb{R}^{d-1}$ (depending only on $d$), with the property that for all $y\in\F_{\alpha_n}$ there exists  $u\in\W^\F$ such that $Q_u^\theta\cap\C_\ddag\subset B(y,\alpha_n)$.
Finally, we use that $\C_\ddag\subset  B((r-\rho)e_d,r)$, the fact that $\C_\ddag$ can be covered by the sets $Q_u^\theta\cap\C_\ddag$ with $u\in\W^\F$, and \eqref{cmeas}, to obtain the following sequence of inequalities
\beq
\mu\left(Q_u^\theta\cap\C_\ddag\cap B((r-\rho)e_d,r)\right)=\mu(Q_u^\theta\cap\C_\ddag)\geq\mu(\C_\ddag)/m^\F\geq  L^{\F} \alpha_n^{(d-1)/2}\rho^{(d+1)/2},
\eeq
where $L^\F:=\frac{\omega_{d-1}}{m^F(d+1)2^{(d-1)/2}}$ depends only on $d$.

We finish by defining the family
\beq
\cU_{0,\alpha_n}= \{C_{u,\alpha_n}^\theta: u\in\W^{\G}\}\cup\{Q_u^\theta\cap\C_\ddag: u\in\W^{\F}\}.
\eeq
This completes the proof of the lemma, with $m_2:=m^\G+m^\F$ depending only on $d$ and $L_2 :=\delta\min(L^\G,L^\F)$ only on $(d, \delta)$. \end{proof}

\begin{proof}[Proof of \thmref{cr_d}]
Let $S_n=C_{\alpha_n}(\cX_n)$. With probability one, $\mathcal{X}_n\subset S$, which implies $S_n\subset S$ and
\beq\label{esp}
\E\big[\mu(S_n \symd S)\big]=\mathbb{E}\big[\mu(S\setminus S_n)\big]
=\int_S{P(\exists y\in {B}(x,\alpha_n): {B}(y,\alpha_n)\cap\mathcal{X}_n=\emptyset)\mu({\rm d}x)}.
\eeq
For each $x\in S$ we choose a finite family {\mbox{$\cU_{x,\alpha_n}$}} unavoidable for $\cE_{x,\alpha_n}$.  Then, as a consequence of \defref{inev2}, we have
\begin{equation}
\label{esp2}
\begin{split}
P(\exists y\in {B}(x,\alpha_n): {B}(y,\alpha_n)\cap\mathcal{X}_n=\emptyset)
\le \sum_{U\in\cU_{x,\alpha_n}}(1-P_X(U))^n
\le \sum_{U\in\cU_{x,\alpha_n}}\exp\left(-nP_X(U)\right).
\end{split}
\end{equation}
We partition $S$ into two subsets
\beq
S_1=\left\{x\in S:\ \ \dist(x,\partial S)>\alpha_n/2\right\}, \quad
S_2=\left\{x\in S:\ \ \dist(x,\partial S)\leq  \alpha_n/2\right\}.
\eeq
For those $x\in S_1$, choose a family as in \lemref{inev.Rd1}, to get
\beq\label{O1d}
\int_{S_1}{\sum_{U\in\cU_{x,\alpha_n}}\exp(-nP_X(U))\mu({\rm d}x)}
\leq\int_{S_1}{m_1\exp(-nL_1\alpha_n^d)\mu({\rm d}x)}
=O\left(\textnormal{e}^{-L_1n\alpha_n^d}\right),
\eeq
where $m_1$ and $L_1$ are defined in \lemref{inev.Rd1}.
For those $x\in S_2$, choose a family as in \lemref{inev.Rd2}, and follow the same arguments as in \citep{patrod13}, to get
\beq\label{O2d}
\begin{split}
\int_{S_2}{\sum_{U\in\cU_{x,\alpha_n}}\exp(-nP_X(U))\mu({\rm d}x)}
&\leq\int_{S_2}{m_2\exp\left(-L_2n\alpha_n^{(d-1)/2}\dist(x,\partial S)^{(d+1)/2}\right)\mu({\rm d}x)}\\
&=O\left(\alpha_n^{-(d-1)/(d+1)}n^{-2/(d+1)}\right).
\end{split}
\eeq
It follows from \eqref{O1d} and \eqref{O2d}, and all the derivations that precede these, that
\beq\label{larga}
\E\big[\mu(C_{\alpha_n}(\cX_n) \symd S)\big]=O\left(\textnormal{e}^{-L_1n\alpha_n^d}+\alpha_n^{-(d-1)/(d+1)}n^{-2/(d+1)}\right).
\eeq
Since $\alpha_n$ is bounded by $r$ and $n \alpha_n^d/\log n \to \infty$, we have $\textnormal{e}^{-L_1n\alpha_n^d}=o(\alpha_n^{-(d-1)/(d+1)}n^{-2/(d+1)})$, and this completes the proof.
\end{proof}

\small
\bibliographystyle{chicago}
\bibliography{volume-minimax}

\end{document}